\definecolor{darkblue}{rgb}{0,0.1,.5}
\def\R{\mathbb{R}}
\def\C{\mathbb{C}}
\def\Z{\mathbb{Z}}
\def\sk{\mathcal K}
\def\sj{\mathcal J}
\def\zk{\mathcal {Z_K}}	
\def\ve{\varepsilon}
\newtheorem{theorem}{Theorem}[section]
\newtheorem{lemma}[theorem]{Lemma}
\newtheorem{prop}[theorem]{Proposition}
\renewenvironment{proof}[1][{\itshape Proof}]{{\itshape #1. }}{\qed}
\theoremstyle{definition}
\newtheorem{defn}[theorem]{Definition}
\newtheorem{construction}{Construction}
\newtheorem{exm}[theorem]{Example}
\author{Semyon Abramyan}
\title{Iterated Higher Whitehead products in topology of moment-angle complexes}
\address{Faculty of Mathematics, National Research University HSE, 6 Usacheva Street, Moscow, RUSSIA}
\email{semyon.abramyan@gmail.com}
\begin{document}

\maketitle

\tableofcontents

\section{Introduction}
The moment-angle complex $\zk$ is a cell complex built from products of polydisks and tori parametrized by simplices in finite simplicial complex $\sk$. There is torus action on $\zk$, which playes an important role in toric topology (see~\cite{TT}). In the case when $\sk$ is a triangulation of a sphere, $\zk$ is a topological manifold with rich geometric structure. Moment-angle complexes are particular examples of the homotopy-theoretical construction of \textit{polyhedral products}, which provides a wonderful "testing ground" for application of unstable homotopy theory techniques.

In this paper we study the topological structure of moment-angle complexes $\zk$. Interest to higher Whitehead products in homotopy groups of moment-angle complexes and polyhedral products goes back to the work of Panov and Ray \cite{pr} where first structure results were obtained. Important results about structure of higher Whitehead products were obtained in the works of Grbi\'c, Theriault
\cite{gt} and Iriye, Kishimoto \cite{ki}.

We consider two classes of simplicial complexes. The first class $B_{\Delta}$ consists of simplicial complexes $\sk$ for which $\zk$ is homotopy equivalent to a wedge of spheres. The second class $W_{\Delta}$ consists of $\sk\in B_{\Delta}$ such that all spheres in the wedge are realized by iterated higher Whitehead products (see~\hyperref[defenitions]{\S\ref{defenitions}}). Buchstaber and Panov asked in \cite[Problem~8.4.5]{TT} if it is true that $B_{\Delta} = W_{\Delta}$. In this paper we show that this is not the case (see~\hyperref[non]{\S\ref{non}}). Namely, we give an example of a simplicial complex whose corresponding moment-angle complex is homotopy equivalent to a wedge of spheres, but there is a sphere which cannot be realized by any linear combination of iterated higher Whitehead products.

On the other hand we show that class $W_{\Delta}$ is large enough. Namely, we show that the class $W_{\Delta}$ is closed with respect to two explicitly defined operations on simplicial complexes (see~\hyperref[glue]{Proposition~\ref{glue}} and \hyperref[soper]{Th.~\ref{soper}}). Then using these operations we prove that there exists a simplicial complex that realizes any given iterated higher Whitehead product (see~\hyperref[exist]{Theorem~\ref{exist}}). We also describe the smallest simplicial complex that realizes an iterated product with only two pairs of nested brackets (see~\hyperref[least]{Theorem~\ref{least}}).

I wish to express gratitude to my supervisor Taras E.~Panov for stating the problem, valuable advice and stimulating discussions. I thank Anton Ayzenberg for valuable comments. I also grateful to express my thanks to Maria Zarubina for careful reading of the text and correcting typos.

\section{Preliminaries}
\label{defenitions}

A \textit{simplicial complex} $\sk$ on the set $[m] \overset{\rm def}{=} \{1, 2, \ldots, m\}$ is a collection of subsets $I \subset [m]$ closed under taking any subsets. We refer to $I\in\sk$ as \textit{simplices} or \textit{faces} of $\sk$, and always assume that $\varnothing\in\sk$. Denote by $\Delta^{m-1}$ or $\Delta(1, \ldots, m)$ the full simplex on the set $[m]$.

Assume we are given a set of $m$ topological pairs
\begin{align*}
	(\underline X, \underline A) = \{(X_1, A_1),\dots, (X_m, A_m)\}
\end{align*}
where $A_i\subset X_i$. For each simplex $I\in\sk$ we set
\begin{align*}
	(\underline X, \underline A)^I = \{(x_1, \ldots, x_m)\in X_1\times\cdots\times X_m\;|\; x_j \in A_j \text{ for $j\notin I$}\}.
\end{align*}
The \textit{polyhedral product} of $(\underline X, \underline A)$ corresponding to $\sk$ is the following subset of $X_1\times\dots\times X_m$:
	\begin{align*}
	(\underline X, \underline A)^{\sk} = \bigcup\limits_{I\in \sk} (\underline X, \underline A)^I\qquad (\subset X_1\times\cdots\times X_n).
	\end{align*}
	
In case when all the pairs $(X_i, A_i)$ are $(D^2, S^1)$ we use the notation $\zk$ for $(\underline X, \underline A)^{\sk}$, and refer to $\zk = (D^2, S^1)^{\sk}$ as the \textit{moment-angle complex}. Also, if all the pairs $(X_i, A_i)$ are $(X, pt)$ we use the abbreviated notation $X^{\sk}$ for $(\underline X, \underline A)^{\sk}$.

\begin{theorem}[{\cite[Ch.4]{TT}}]\label{cofib}
	The moment-angle complex $\zk$ is the homotopy fibre of the canonical inclusion $(\C P^{\infty})^{\sk}\hookrightarrow (\C P^{\infty})^m$.
\end{theorem}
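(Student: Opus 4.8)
The plan is to identify $\cpk$ with the Borel construction of the $T^m$-space $\zk$ and then read off $\zk$ as the fibre of the associated Borel fibration. The starting observation is that $\C P^\infty$ is a model for the classifying space $BS^1$, that $ES^1$ is contractible with free $S^1$-action, and that $D^2$, with the standard rotation action of $S^1$, is $S^1$-equivariantly contractible onto its fixed point $0$. Hence the Borel construction $ES^1\times_{S^1}D^2$ deformation retracts onto $ES^1\times_{S^1}\{0\}=BS^1=\C P^\infty$, while $ES^1\times_{S^1}S^1=ES^1$ is contractible; under these identifications the map induced by the inclusion $S^1\hookrightarrow D^2$ becomes the inclusion of a point $\{*\}\hookrightarrow\C P^\infty$.

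Next I would use that the Borel construction commutes with the polyhedral product. Since $ET^m=(ES^1)^m$ and $T^m=(S^1)^m$ acts coordinatewise on $\zk=(D^2,S^1)^\sk\subset(D^2)^m$, one has
\begin{align*}
ET^m\times_{T^m}\zk=\bigcup_{I\in\sk}\Bigl(\prod_{j\in I}(ES^1\times_{S^1}D^2)\times\prod_{j\notin I}(ES^1\times_{S^1}S^1)\Bigr),
\end{align*}
and by the previous paragraph the right-hand side is homotopy equivalent to the polyhedral product $(\C P^\infty,\{*\})^\sk=\cpk$. Taking $\sk=\Delta^{m-1}$ likewise gives $ET^m\times_{T^m}(D^2)^m\simeq(\C P^\infty)^m=BT^m$, and this equivalence is realised by the Borel projection.

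Now I would invoke the Borel fibration $\zk\to ET^m\times_{T^m}\zk\to BT^m$. The inclusion $\zk\hookrightarrow(D^2)^m$ is $T^m$-equivariant, hence induces a commuting triangle over $BT^m$ relating this fibration to the Borel fibration of $(D^2)^m$; since $(D^2)^m$ is equivariantly contractible, the projection $ET^m\times_{T^m}(D^2)^m\to BT^m$ is a homotopy equivalence, and the triangle shows that, under the identifications of the previous paragraph, the Borel projection $ET^m\times_{T^m}\zk\to BT^m$ corresponds to the canonical inclusion $\cpk\hookrightarrow(\C P^\infty)^m$. Since the Borel projection is a fibration with fibre $\zk$, it follows that $\zk$ is the homotopy fibre of $\cpk\hookrightarrow(\C P^\infty)^m$.

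The polyhedral-product bookkeeping here is routine; the delicate point is the compatibility of the homotopy equivalences with the projection maps, i.e.\ that $ES^1\times_{S^1}D^2\to ES^1/S^1$ is a homotopy equivalence \emph{over} $\C P^\infty$ and is compatible with $ES^1\times_{S^1}S^1\to\C P^\infty$, so that the identification $ET^m\times_{T^m}\zk\simeq\cpk$ genuinely intertwines the Borel projection with the canonical inclusion. One way to sidestep this is to compute the homotopy fibre directly, as the pullback of the fibration $ET^m\to BT^m$, i.e.\ of $(ES^1)^m\to(BS^1)^m$, along the canonical inclusion: over the piece of $\cpk$ indexed by $I\in\sk$ this pullback restricts to $ES^1$ for $j\in I$ and to the fibre $S^1$ for $j\notin I$, yielding $(ES^1,S^1)^\sk\simeq(D^2,S^1)^\sk=\zk$.
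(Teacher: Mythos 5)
The paper gives no proof of this statement, quoting it from \cite[Ch.~4]{TT}, and your argument is correct and essentially the standard one given there: identify $\cpk$ with the Borel construction $ET^m\times_{T^m}\zk$, or equivalently pull back $ET^m\to BT^m$ along the canonical inclusion to obtain $(ES^1, S^1)^{\sk}$, and read off $\zk$ as the fibre. The only ingredient used silently in your final step is the homotopy invariance of the polyhedral product for CW (or NDR) pairs, which is what turns the equivalence of pairs $(D^2, S^1)\to(ES^1, S^1)$ into the equivalence $(D^2, S^1)^{\sk}\simeq(ES^1, S^1)^{\sk}$; this is available in \cite[\S 4.2]{TT}.
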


We shall also need the following more explicit description of the map $\mathcal Z_{\sk} \to (\C P^{\infty})^{\sk}$. Consider the map of pairs $(D^2, S^1) \to (\C P^{\infty}, pt)$ sending the interior of the disc homeomorphically onto the complement of the basepoint in $\C P^1$. By the functoriality, we have the induced map of the polyhedral products $\mathcal{Z_K} = (D^2, S^1)^{\sk} \to (\C P^{\infty})^{\sk}$.

The general definition of the higher Whitehead product can be found
in~\cite{ha}. We only describe Whitehead products in the space
$(\C P^{\infty})^{\sk}$ and their lifts to $\zk$. In this case
the indeterminacy of higher Whitehead products can be controlled
effectively because extension maps can be chosen canonically.

Let $\mu_i$ be the map
$(D^2, S^1) \to S^2 \cong \C P^1 \hookrightarrow (\C P^{\infty})^{\vee m}
{}\hookrightarrow (\C P^{\infty})^{\sk}$.
Here the second map is the canonical
inclusion of $\C P^1$ into the $i$-th summand of wedge sum. The third map is induced
by embedding of $m$ disjoint points into $\sk$.
The \emph{Whitehead product} (or \emph{Whitehead bracket}) $[\mu_i, \mu_j]$ of $\mu_i$ and $\mu_j$
is the homotopy class of the map
$$
S^3 \cong \partial D^4 \cong \partial (D^2\times D^2) \cong D^2\times S^1\cup{}
S^1\times D^2 \xrightarrow{[\mu_i,\mu_j]} (\C P^{\infty})^{\sk},
$$
where 
$$
[\mu_i, \mu_j](x, y) =
\begin{cases}
\mu_i(x) \quad \text{for $(x,y)\in D^2\times S^1$};\\
\mu_j(y) \quad \text{for $(x,y)\in S^1\times D^2$}.
\end{cases}   
$$

Every Whitehead product becomes trivial after composing with the embeding
$(\C P^{\infty})^{\sk}\hookrightarrow (\C P^{\infty})^m \simeq K(\Z^m, 2)$.
This implies that the map
$[\mu_i, \mu_j]\colon S^3 \to (\C P^{\infty})^{\sk}$ has a lift $S^3 \to \zk$;
we will use the same notation for it. Such a lift $[\mu_i, \mu_j]$
is given by the inclusion of subcomplex
$$
[\mu_i, \mu_j]\colon S^3 \cong{}
D^2\times S^1\cup S^1\times D^2 \hookrightarrow \zk.
$$
If the Whitehead product $[\mu_i, \mu_j]$
is trivial then the map $[\mu_i, \mu_j]\colon S^3 \to \zk$
can be extended canonically to a map
$D^4 \cong D^2_i\times D^2_j \hookrightarrow\zk$. 

\emph{Higher Whitehead products} are defined inductively as follows.

Let $\mu_{i_1},\dots,\mu_{i_n}$ be a collection of maps such
that the $(n-1)$-fold product
$[\mu_{i_1},\dots,\widehat{\mu_{i_k}},\dots, \mu_{i_n}]$ is trivial for any $k$.
Then for every $(n-1)$-fold product there exists a \emph{canonical}
extension $\overline{[\mu_{i_1},\dots,\widehat{\mu_{i_k}},\dots, \mu_{i_n}]}$
to a map from $D^{2(n-1)}$ which is the composition
$$
\overline{[\mu_{i_1},\ldots, \widehat{\mu_{i_k}},\ldots, \mu_{i_n}]}\colon
D^2_{i_1}\times\ldots\times D^2_{i_{k-1}}\times D^2_{i_{k+1}}\times\ldots\times
D^2_{i_n}\hookrightarrow \zk \to (\C P^{\infty})^{\sk},
$$ 
and all these extensions are compatible on the intersections.
The \emph{$n$-fold product} $[\mu_{i_1},\ldots, \mu_{i_n}]$ is defined
as the homotopy class of the map\footnote{In all set-theoretic formulae in this paper we consider the product operation to be a higher priority than the union.}
$$
S^{2n-1} \cong \partial(D^2_{i_1}\times\ldots\times D^2_{i_n})\cong
\bigcup\limits_{k=1}^n D^2_{i_1}\times\ldots\times S^1_{i_k}\times\ldots
\times D^2_{i_n}\xrightarrow{[\mu_{i_1},\ldots, \mu_{i_n}]}
(\C P^{\infty})^{\zk},
$$
which is given as follows:
\begin{align*}
[\mu_{i_1},\ldots, \mu_{i_n}](x_1,\ldots, x_n) =
\begin{cases}
\overline{[\mu_{i_1},\ldots, \mu_{i_{n-1}}]}(x_1,\ldots, x_{n-1})
&\text{for $x_n \in S^1_{i_n}$;}\\
\ldots \\
\overline{[\mu_{i_1},\ldots,\widehat\mu_{i_k},\ldots, \mu_{i_n}]}
(x_1,,\ldots,\widehat x_k,\ldots, x_n) &\text{for $x_k\in S^1_{i_k}$;}\\
\ldots \\
\overline{[\mu_{i_2},\ldots, \mu_{i_n}]}(x_2,\ldots, x_n)
&\text{for $x_1 \in S^1_{i_1}$.}
\end{cases}
\end{align*} 

Alongside with higher Whitehead products of canonical maps $\mu_i$
we will consider \emph{general iterated} higher 
Whitehead products, i.\,e. higher Whitehead products whose
arguments can be higher Whitehead products.
For example, $[\mu_1,\mu_2,[\mu_3,\mu_4,\mu_5],
[\mu_6,\mu_{13},[\mu_7,\mu_8,\mu_9],\mu_{10}],[\mu_{11},\mu_{12}]]$.
In most cases we consider only \emph{nested} iterated higher
Whitehead products, i.\,e. products of the form
$$
w = \left[\mu_{i_{01}}, \ldots, \mu_{i_{0p_0}}, \Big[\ldots \big[\mu_{i_{q1}},\ldots, \mu_{i_{qp_q}}\big]\Big]\right] \colon S^{d(w)}\to(\C P^{\infty})^{\sk}.
$$
Here $d(w)$ denotes the dimension of $w$.

As in the case of ordinary Whitehead products any iterated higher Whitehead product lifts to a map $S^{d(w)} \to\zk$ for dimensional reasons.    

\begin{defn}\label{wdelta} We say that a simplicial complex $\sk$ \textit{realizes} a Whitehead product $w$ if $\zk$ is homotopy equivalent to a wedge of spheres in which one of the wedge summands is realized by a lift $S^{d(w)}~\to~\zk$~of~$w$.

Denote by $W_{\Delta}$ the class of simplicial complexes $\sk$ such that $\zk$ is a wedge of spheres and each sphere in the wedge is a lift of a linear combination of iterated higher Whitehead products. The class $W_{\Delta}$ is not empty as it contains the boundary of simplex $\partial\Delta^n$ for each $n>0$. The moment-angle complex $\zk$ corresponding to $\partial \Delta^n$ is homotopy equivalent to $S^{2n+1}$, which can be realized by the product $[\mu_0, \ldots, \mu_n]$.
\end{defn}

We consider the following decomposition of the disc $D^2$ into 3 cells: the point $1\in D^2$ is the 0-cell; the complement to $1$ in the boundary circle is the 1-cell, which we denote by~$S$; and the interior of $D^2$ is the 2-cell, which we denote by~$D$. These cells are canonically oriented as subsets of $\R^2$. By taking products we obtain a cellular decomposition of~$(D^2)^m$ whose cells are parametrised by pairs of subsets $J,I\subset [m]$ with $J\cap I=\varnothing$: the set $J$ parametrises the $S$-cells in the product and $I$ parametrizes the $D$-cells as we describe below. We denote the cell of $(D^2)^m$
corresponding to a pair $J,I$ by $\chi(J,I)$:
\begin{align*}
	\chi(J, I) = \big\{(x_1, \ldots, x_m) \in (D^2)^m\;\big|\;\text{$x_i \in D$ for $i\in I$, $x_j \in S$ for $j \in J$ and $x_l = 1$ for $l\notin J\cup I$} \big\}.
\end{align*}
Then $\zk$ embeds as a cellular subcomplex in~$(D^2)^m$; we have $\chi(J,I)\subset\zk$ whenever $I\in\sk$.

The coproduct in the homology of a cell-complex $X$ can be defined as follows.
Consider the composite map of cellular cochain complexes
\begin{align}\label{diag_comp}
\mathcal C_*(X) \xrightarrow{\widetilde\Delta_*} \mathcal C_*(X\times X) \xrightarrow{P} \mathcal C_*(X)\otimes \mathcal C_*(X).
\end{align}
Here the map $P$ sends the basis chain corresponding to a cell $e^i\times e^j$ to $e^i\otimes e^j$. The map $\widetilde\Delta_*$ is induced by a cellular map $\widetilde\Delta$ homotopic to the diagonal $\Delta\colon X \xrightarrow{x \mapsto (x, x)} X\times X$. In homology, the map~\eqref{diag_comp} induces a coproduct $H_*(X)\to H_*(X)\otimes H_*(X)$ which does not depend on a choice of cellular approximation and is functorial. However, the map~\eqref{diag_comp} itself is not functorial because the choice of a cellular approximation is not canonical.

Nevertheless, in the case $X=\zk$ we can use the following construction.

\begin{construction}[{\cite[Construction 4.5.2]{TT}}]
\label{cell_approx}
Consider the map $\widetilde\Delta\colon D^2 \to D^2\times D^2$ given in the polar coordinates $z = \rho e^{i\varphi}\in D^2, 0\leqslant\rho\leqslant 1, 0\leqslant\varphi\leqslant  2\pi$, by the formula
\begin{align*}
	\rho e^{i\varphi}\mapsto
	\begin{cases}
	(1 - \rho + \rho e^{2i\varphi}, 1) \quad \text{for $0 \leqslant \varphi \leqslant \pi,$} \\
	(1, 1 - \rho + \rho e^{2i\varphi}) \quad \text{for $\pi  \leqslant\varphi<2\pi$}.
	\end{cases}
\end{align*}
	This is a cellular map homotopic to the diagonal $\Delta\colon D^2 \to D^2\times D^2$, and its restriction to the boundary circle $S^1$ is a diagonal approximation for $S^1$:
	\begin{center}
		\begin{tikzpicture}[every node/.style={midway}]
		\matrix[column sep={4em,between origins},row sep={4em}] at (0,0) {
			\node(A)   {$S^1$};              && \node(B) {$D^2$}; \\
			\node(C)   {$S^1\times S^1$}; && \node(D) {$D^2\times D^2.$};  \\};
		\draw[->] (A) -- (B) node[anchor=south] {};
		\draw[->] (A) -- (C) node[anchor=east]  {$\widetilde\Delta $};
		\draw[->] (B) -- (D) node[anchor=west]  {$\widetilde\Delta $};
		\draw[->](C) -- (D) node[anchor=south] {};
		\end{tikzpicture}
	\end{center}
	Taking $m$-fold product we obtain a cellular approximation $\widetilde\Delta^m\overset{\rm def}{=} \widetilde\Delta\times\dots\times \widetilde\Delta\colon (D^2)^m \to (D^2)^m \times (D^2)^m$ which restricts to a cellular approximation of the diagonal map of $\zk$, as described in the following diagram:
	\begin{center}
		\begin{tikzpicture}[every node/.style={midway}]
		\matrix[column sep={4em,between origins}, row sep={4em}] at (0, 0) {
			\node(A)   {$\mathcal{Z_K}$};         && \node(B) {$(D^2)^m$}; \\
			\node(C)   {$\mathcal{Z_K} \times \mathcal{Z_K}$}; && \node(D) {$(D^2)^m\times (D^2)^m.$};  \\};
		\draw[->] (A) -- (B) node[anchor=south] {};
		\draw[->] (A) -- (C) node[anchor=east]  {$\widetilde\Delta^m|_{\zk}$};
		\draw[->] (B) -- (D) node[anchor=west]  {$\widetilde\Delta^m$};
		\draw[->] (C) -- (D) node[anchor=south] {};
		\end{tikzpicture}
	\end{center}
The diagonal approximation $\widetilde\Delta$ is functorial with respect to maps of moment-angle complexes induced by simplicial maps.

Further on, we denote $\widetilde\Delta^m|_{\zk}$ simply by $\widetilde\Delta$. 
\end{construction}	

\section{Algebraic constructions}
\label{algebraic}
Let $\Lambda\langle u_1, \dots, u_m \rangle$ and $\Z\langle\sk\rangle$ denote respectively the exterior coalgebra and the Stanley--Reisner coalgebra of a simplicial complex $\sk$,
which is a subcoalgebra of the symmetric coalgebra $\Z\langle v_1, \dots, v_m\rangle$.
The Stanley--Reisner coalgebra $\Z\langle\sk\rangle$ is generated as a $\Z$-module by monomials whose support is a simplex of $\sk$ \cite[\S 8.4]{TT}. 
Consider the submodule $\mathcal{R_*(K)}$ of $\Lambda\langle u_1, \dots, u_m \rangle\otimes \Z\langle\sk\rangle$ additively generated by monomials not containing $u_iv_i$ and $v_i^2$.
Clearly, $\mathcal{R_*(K)}\subset\Lambda\langle u_1, \dots, u_m \rangle\otimes \Z\langle\sk\rangle$ is a subcoalgebra. We endow it with the differential $\partial = \sum\limits_{i = 1}^m u_i\frac{\partial}{\partial v_i}$ of degree $-1$.

The following statements are obtained by dualization of the corresponding statements from \cite[\S 4.5]{TT} for cellular cochains and cohomology.

\begin{lemma}\label{isochain}
	The map
	\begin{align*}
		g \colon \mathcal{R_*(K)} \xrightarrow{u_Jv_I \mapsto \chi(J, I)} \mathcal {C_*(Z_K)}
	\end{align*}
	is an isomorphism of chain complexes. Hence, there is an additive isomorphism $H\big( \mathcal{R_*(K)}, \partial\big) \cong H_*(\mathcal{Z_K})$.
\end{lemma}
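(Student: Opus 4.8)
The plan is to exhibit an explicit map $g$ from $\mathcal{R_*(K)}$ to $\mathcal{C_*(\mathcal Z_K)}$, check it is a bijection on the distinguished monomial bases, and then verify that it intertwines the algebraic differential $\partial = \sum_i u_i \frac{\partial}{\partial v_i}$ with the cellular boundary operator. First I would pin down the combinatorial bookkeeping: a basis monomial of $\mathcal{R_*(K)}$ is $u_J v_I$ where $J, I \subset [m]$ are disjoint and $I \in \sk$ (the condition $I\in\sk$ comes from $\Z\langle\sk\rangle$, the disjointness and square-freeness from the definition of $\mathcal{R_*(K)}$), and by Construction~\ref{cell_approx}'s cell structure on $(D^2)^m$, the cells of $\zk$ are exactly the $\chi(J,I)$ with $I\in\sk$, $J\cap I = \varnothing$. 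So $u_J v_I \mapsto \chi(J,I)$ is visibly a bijection on bases, hence a $\Z$-module isomorphism; the degrees match because $\deg u_i = 1$, $\deg v_i = 2$, so $\deg(u_J v_I) = |J| + 2|I| = \dim\chi(J,I)$.

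The substantive step is compatibility with differentials, i.e. $g\circ\partial = \partial_{\mathrm{cell}}\circ g$. Here $u_i\frac{\partial}{\partial v_i}$ acting on $u_J v_I$ replaces one factor $v_i$ (for $i\in I$) by $u_i$, with the Koszul sign coming from moving $u_i$ into position within $u_J$; this should mirror the fact that the cellular boundary of $\chi(J,I)$ is obtained by shrinking one of the $D$-cells (indexed by $i\in I$) to the $S$-cell in that coordinate, since $\partial D = S$ (with appropriate orientation) and $\partial S = 0$ in the chosen three-cell decomposition of $D^2$. The terms where a $D$-cell degenerates to the $0$-cell $\{1\}$ contribute nothing, matching the absence of lower-order terms in $\partial$; and one must check no new monomials of the form $u_i v_i$ appear, which they do not because $i$ was in $I$ and leaves $I$ as it enters $J$. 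I would verify the one-variable identity $\partial_{\mathrm{cell}}\chi(\varnothing,\{i\}) = \chi(\{i\},\varnothing)$ in $D^2$ directly from the orientations, then extend to $(D^2)^m$ by the Leibniz rule for the boundary of a product of cells, tracking signs.

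The main obstacle I anticipate is sign bookkeeping: getting the Koszul signs in $\partial$ to match the orientation signs in the cellular boundary of a product cell, uniformly in $m$ and for every choice of $i\in I$. This is exactly the kind of computation that is done in \cite[\S 4.5]{TT} for cochains and cohomology, so rather than redo it I would invoke that the present statement is the linear dual of the cited one: dualizing the cochain isomorphism $\mathcal{C^*(\mathcal Z_K)} \cong \mathcal{R^*(K)}$ of \cite[\S 4.5]{TT} and identifying the dual of the Stanley--Reisner algebra differential with $\sum_i u_i\frac{\partial}{\partial v_i}$ gives the chain-level isomorphism; the additive isomorphism $H(\mathcal{R_*(K)},\partial)\cong H_*(\mathcal Z_K)$ is then immediate by passing to homology. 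Finally I would remark that under $g$ the coalgebra structure on $\mathcal{R_*(K)}$ corresponds, via Construction~\ref{cell_approx}, to the cellular diagonal approximation $\widetilde\Delta$ on $\mathcal Z_K$ — though strictly this is more than the stated lemma requires and could be deferred.
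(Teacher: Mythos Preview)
Your proposal is correct and follows essentially the same approach as the paper: the paper does not give a separate proof of this lemma but simply notes (in the sentence preceding it) that it is obtained by dualizing the corresponding cochain/cohomology statement from \cite[\S 4.5]{TT}, which is exactly the route you ultimately take after sketching the basis bijection and differential compatibility. Your additional remarks about the coalgebra structure and $\widetilde\Delta$ anticipate the next lemma in the paper and are not needed here, as you note.
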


\begin{lemma}
The cellular chain coalgebra $\mathcal C_*(\mathcal{Z_K})$ with the product defined via the diagonal approximation $\widetilde\Delta\colon \mathcal{Z_K} \to \mathcal{Z_K}\times\mathcal{Z_K}$ \textup{(see~\hyperref[cell_approx]{Construction~\ref{cell_approx}})} is isomorphic to the coalgebra $\mathcal{R_*(K)}$. Hence, we have an isomorphism of homology coalgebras
	\begin{align*}
		H\big(\mathcal{R_*(K)}, \partial\big) \cong H_*(\mathcal{Z_K}).
	\end{align*}
\end{lemma}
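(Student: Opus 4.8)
The plan is to obtain the isomorphism of coalgebras by dualizing the corresponding differential graded \emph{algebra} isomorphism for cellular cochains from \cite[\S4.5]{TT}, as announced just before Lemma~\ref{isochain}. Since $\zk$ is a finite $CW$-complex and $\mathcal{R_*(K)}$ is a free $\Z$-module of finite rank in every degree, the functor $\mathrm{Hom}_{\Z}(-,\Z)$ is a perfect duality interchanging products and coproducts. Write $\mathcal R^*(\sk) = \mathrm{Hom}_{\Z}(\mathcal{R_*(K)},\Z)$, equipped with the dual differential $d$ (satisfying $du_i = v_i$) and with the product dual to the coproduct of $\mathcal{R_*(K)}$; concretely $\mathcal R^*(\sk)$ is the quotient of $\Lambda\langle u_1,\dots,u_m\rangle\otimes\Z[\sk]$ by the relations $v_i^2 = u_iv_i = 0$. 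By \cite[\S4.5]{TT}, the $\Z$-dual $g^{*}$ of the map $g$ of Lemma~\ref{isochain} is an isomorphism of differential graded algebras from $\mathcal C^*(\zk)$, with the cup product defined via $\widetilde\Delta$, onto $\mathcal R^*(\sk)$. Dualizing once more and using that the coproduct on $\mathcal C_*(\zk)$ defined via $\widetilde\Delta$ is, by the very construction in~\eqref{diag_comp}, the $\Z$-dual of that cup product, one concludes that $g$ itself is an isomorphism of differential graded coalgebras $\mathcal{R_*(K)} \to \mathcal C_*(\zk)$; passing to homology (homology is functorial for differential graded coalgebras) then yields the claimed isomorphism $H(\mathcal{R_*(K)},\partial)\cong H_*(\zk)$ of coalgebras, the right-hand side carrying the topological coproduct because $\widetilde\Delta$ is a cellular approximation of the diagonal.

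To make the duality concrete — and to locate the only delicate point — one computes $\widetilde\Delta_*$ on the three cells of a single copy of $D^2$ directly from the formula in Construction~\ref{cell_approx}: on the $0$-cell $\widetilde\Delta_*(1) = 1\otimes 1$; on the $1$-cell $\widetilde\Delta_*(S) = S\otimes 1 + 1\otimes S$, since the halves $0\le\varphi\le\pi$ and $\pi\le\varphi<2\pi$ sweep out $S\times\{1\}$ and $\{1\}\times S$; and on the $2$-cell $\widetilde\Delta_*(D) = D\otimes 1 + 1\otimes D$, since those same halves now sweep out $D\times\{1\}$ and $\{1\}\times D$ with degree $1$, while the image never meets the interior of the mixed cell $S\times S$ (one of the two coordinates is always equal to $1$). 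These agree with the coproducts $\Delta(1) = 1\otimes 1$, $\Delta(u_i) = u_i\otimes 1 + 1\otimes u_i$ and $\Delta(v_i) = v_i\otimes 1 + 1\otimes v_i$ in $\Lambda\langle u_1,\dots,u_m\rangle\otimes\Z\langle\sk\rangle$. Because $\widetilde\Delta^m = \widetilde\Delta^{\times m}$ and the cellular chains of a product are the tensor product of cellular chains, $\widetilde\Delta_*$ on $\chi(J,I)$ is the corresponding tensor product of these single-coordinate answers; every term it produces has the form $\pm\,\chi(J',I')\otimes\chi(J\setminus J',I\setminus I')$ with $I'\subset I\in\sk$ and disjoint $J$- and $I$-parts, so both tensor factors are again cells of $\zk$. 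This is exactly why $\mathcal{R_*(K)}$ is a subcoalgebra of $\Lambda\langle u_1,\dots,u_m\rangle\otimes\Z\langle\sk\rangle$ and no monomials containing $u_iv_i$ or $v_i^2$ ever appear.

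The main obstacle is the sign bookkeeping in this last step: one must verify that the Koszul signs arising when the odd-degree factors $u_J$ are shuffled past the even-degree factors $v_I$ in the coproduct of $\Lambda\langle u_1,\dots,u_m\rangle\otimes\Z\langle\sk\rangle$ coincide with the signs produced by the Eilenberg--Zilber identification $\mathcal C_*((D^2)^m)\cong \mathcal C_*(D^2)^{\otimes m}$ together with the graded transposition of tensor factors in $\mathcal C_*(\zk)\otimes\mathcal C_*(\zk)$. This is routine once a fixed ordering convention is chosen, but it is the part that has to be carried out carefully; the cleanest way to avoid repeating it is the duality argument of the first paragraph, which transports the sign conventions already fixed in \cite[\S4.5]{TT} verbatim.
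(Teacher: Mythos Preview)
Your proposal is correct and takes precisely the route the paper indicates: the paper gives no separate proof of this lemma but simply states, just before Lemma~\ref{isochain}, that both results are obtained by dualizing the corresponding cellular cochain statements of \cite[\S4.5]{TT}. Your argument carries out exactly this dualization, and your supplementary single-factor computation of $\widetilde\Delta_*$ on $1$, $S$, $D$ together with the sign discussion is a useful elaboration rather than a departure from the paper's approach.
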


For each subset $J\subset [m]$ and for each simpicial complex on $[m]$ denote by $\sk_J$ \emph{the full subcomplex} on the vertex set $J$, i.\,e. $\sk_J = \{I\in \sk|\, I\subset J\}.$

\begin{theorem}\label{homology}
The homomorphisms
\begin{align*}
	C_{p-1}(\sk_J) \xrightarrow{L\mapsto \ve(L,J)\chi(J\backslash L, L)} \mathcal C_{p+|J|}(\mathcal{Z_K})
\end{align*}
induce injective homomorphisms
\begin{align*}
	\widetilde H_{p-1}(\sk_J) \hookrightarrow H_{p+|J|}(\mathcal{Z_K}),
\end{align*}
which are functorial with respect to simplicial inclusions. Here $L\in \sk_J$ is a simplex and $\ve(L,J)$ is the sign of the shuffle $(L,J)$.
The inclusions above induce an isomorphism ofthe  abelian groups
\begin{align*}
	h\colon \bigoplus\limits_{J\subset [m]}\widetilde H_*(\sk_J) \xrightarrow{\cong} H_*(\mathcal{Z_K}).
\end{align*}	
\end{theorem}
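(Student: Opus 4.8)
The plan is to deduce this from Lemma~\ref{isochain} together with the well-known decomposition of the chain complex $\mathcal{R_*(K)}$ as a direct sum over subsets $J\subset[m]$. First I would observe that a monomial $u_Jv_I$ generating $\mathcal{R_*(K)}$ is supported on the subset $J\cup I$, and that the differential $\partial=\sum_i u_i\partial/\partial v_i$ does not change $J\cup I$: each term $u_i\partial/\partial v_i$ moves an index $i$ from $I$ to $J$, keeping the union fixed. Hence $\mathcal{R_*(K)}=\bigoplus_{J\subset[m]}\mathcal{R}_*(K)_J$, where $\mathcal{R}_*(K)_J$ is spanned by those $u_{J\setminus L}v_L$ with $L\in\sk_J$ (here $L$ ranges over simplices contained in $J$, which forces $L\in\sk$ hence $L\in\sk_J$). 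Then I would identify the summand $\mathcal{R}_*(K)_J$ with the augmented (or simplicial) chain complex of the full subcomplex $\sk_J$: sending $L\mapsto \ve(L,J)\,u_{J\setminus L}v_L$ (up to the shuffle sign $\ve(L,J)$, inserted precisely so the differential signs match), the differential $\partial$ corresponds to the simplicial boundary on $\sk_J$, with an appropriate degree shift. A careful bookkeeping shows that $\mathcal{R}_*(K)_J$ in degree $p+|J|$ corresponds to $C_{p-1}(\sk_J)$, so that $H_{p+|J|}(\mathcal{R}_*(K)_J)\cong\widetilde H_{p-1}(\sk_J)$ (the reduced homology appears because the empty simplex contributes the degree-$|J|$ part, i.e.\ the augmentation).

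Next I would assemble these identifications. Composing the inclusion $\mathcal{R}_*(K)_J\hookrightarrow\mathcal{R_*(K)}$ with the isomorphism $g$ of Lemma~\ref{isochain} gives exactly the chain-level map $L\mapsto\ve(L,J)\chi(J\setminus L,L)$ stated in the theorem; passing to homology and using that $g$ is a chain isomorphism, the induced map $\widetilde H_{p-1}(\sk_J)\to H_{p+|J|}(\mathcal{Z_K})$ is injective because it is the homology of a split inclusion of a direct summand. Taking the direct sum over all $J\subset[m]$ and again using that $g$ is an isomorphism of chain complexes yields the isomorphism $h\colon\bigoplus_J\widetilde H_*(\sk_J)\xrightarrow{\cong}H_*(\mathcal{Z_K})$. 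Functoriality with respect to a simplicial inclusion $\sk\hookrightarrow\mathcal L$ (on a common vertex set, or after relabelling) follows because such an inclusion carries $\chi(J\setminus L,L)$ to the cell with the same label, hence respects both the decomposition by $J$ and the maps $g$, and on each summand it restricts to the chain map induced by $\sk_J\hookrightarrow\mathcal L_J$.

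The main obstacle will be the sign bookkeeping: checking that the shuffle sign $\ve(L,J)$ is exactly the correction needed for the map $L\mapsto\ve(L,J)\,u_{J\setminus L}v_L$ to intertwine the simplicial boundary $\partial_{\mathrm{simp}}$ on $C_{*-1}(\sk_J)$ with the differential $\partial=\sum_i u_i\partial/\partial v_i$. One has to compare, for $L=\{l_0<\dots<l_{p-1}\}$, the sign with which the face $L\setminus\{l_k\}$ occurs in $\partial_{\mathrm{simp}}L$ against the sign produced when $\partial$ moves $l_k$ from the $v$-part into the $u$-part past the existing $u$-generators indexed by $J\setminus L$; the definition of $\ve(L,J)$ as the sign of the shuffle permutation sorting the concatenation $(L,J\setminus L)$ into increasing order is tailored to make these agree. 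This is a routine but delicate computation. A secondary, easier point to get right is the degree convention — confirming that $|u_{J\setminus L}v_L|=|J\setminus L|+2|L|$ matches $p+|J|$ when $|L|=p$, so that the claimed grading shift $C_{p-1}(\sk_J)\to\mathcal C_{p+|J|}(\mathcal{Z_K})$ is correct. Everything else is formal once the summand decomposition and the sign matching are in place; since these dualize the cochain statements of \cite[\S4.5]{TT}, I would also remark that one may alternatively obtain the theorem purely by dualizing those results, with the homology statement following from the universal coefficient theorem over $\Z$ since all groups involved are finitely generated and the cochain version is an isomorphism.
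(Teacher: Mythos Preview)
Your proposal is correct and in fact more detailed than the paper, which does not give a proof of this theorem at all: it merely notes that the statement is obtained by dualization of the corresponding cochain and cohomology results from \cite[\S4.5]{TT}. Your direct argument via the support decomposition $\mathcal{R_*(K)}=\bigoplus_J\mathcal{R}_*(K)_J$ is exactly the homology analogue of that cochain decomposition, and your closing remark about dualizing is precisely the paper's stated route; so the approaches coincide.
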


\section{Hurewicz homomorphism for moment-angle complexes}
\label{hurewicz}

We shall use the notation $S_i$ and $D_i$ for the $1$-cell and the $2$-cell in the $i$-th factor of $(D^2)^m$. We denote product cells in $(D^2)^m$ by the words like $D_iS_jD_k$.

In this section we consider iterated higher Whitehead products of the following form:
$$
\Big[\mu_{i_{01}}, \ldots, \mu_{i_{0p_0}}, \big[ \ldots [\mu_{i_{n 1}}, \ldots, \mu_{i_{n p_n}}] \ldots\big]\Big]: S^{2(p_0+\cdots+p_n) - (n+1)}\to \zk,
$$
where $i_{kl}\in [m]$ for all $k$ and $l$.
The existence of a simplicial complex $\sk~\in~W_{\Delta}$ realizing each product above will be proved in the next section (see~\hyperref[exist]{Theorem~\ref{exist}}).

The following lemma is a generalization of Lemma 3.1 from \cite{veryovkin16}.

\begin{lemma}
\label{hur_im_2}
	The Hurewicz image
	\begin{align*}
		h\left(\left[\mu_{i_{01}}, \ldots, \mu_{i_{0p_0}}, \left[ \ldots, \big[\mu_{i_{(n-1)1}}, \ldots, \mu_{i_{(n-1) p_{n-1}}}, [\mu_{i_{n 1}}, \ldots, \mu_{i_{n p_n}}]\big] \ldots\right]\right]\right) \in H_{2(p_0 + \cdots + p_n) - (n+1)}(\mathcal {Z_K})
	\end{align*}
	is represented by the cellular chain
	\begin{align}
	\label{f_hur_im_2}
		\prod\limits_{k = 0}^n \left(\sum\limits_{j = 1}^{p_k} D_{i_{k1}}\cdots D_{i_{k(j-1)}}S_{i_{kj}}D_{i_{k(j+1)}}\cdots D_{i_{kp_k}} \right).
 \end{align}
\end{lemma}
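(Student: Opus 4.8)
The plan is to prove the lemma by induction on the depth $n$ of the iterated bracket, using the explicit cellular descriptions of the higher Whitehead maps given in the Preliminaries together with the functoriality of the diagonal approximation $\widetilde\Delta$ (Construction~\ref{cell_approx}) and the isomorphism $g$ of Lemma~\ref{isochain}. The base case $n=0$ is an ordinary (possibly higher) Whitehead product $[\mu_{i_{01}},\ldots,\mu_{i_{0p_0}}]$; here the defining map sends $S^{2p_0-1}\cong\partial(D^2_{i_{01}}\times\cdots\times D^2_{i_{0p_0}})$ into $\zk$ as the inclusion of the cellular subcomplex $\bigcup_{j=1}^{p_0} D^2_{i_{01}}\times\cdots\times S^1_{i_{0j}}\times\cdots\times D^2_{i_{0p_0}}$, and its fundamental class is, up to sign, exactly the sum $\sum_{j=1}^{p_0} D_{i_{01}}\cdots S_{i_{0j}}\cdots D_{i_{0p_0}}$. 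One needs to check that the orientations match so that the coefficients are all $+1$ (or can be arranged to be), which is the content of the sign bookkeeping in Theorem~\ref{homology}; I would fix once and for all an orientation convention on $S^{2N-1}=\partial(D^2)^N$ that makes the boundary faces appear with coefficient $+1$.

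For the inductive step, write $w=[\mu_{i_{01}},\ldots,\mu_{i_{0p_0}},w']$ where $w'$ is the iterated product of depth $n-1$ on the remaining indices. By the inductive hypothesis the Hurewicz image of $w'\colon S^{d(w')}\to\zk$ is represented by the chain $c' = \prod_{k=1}^n\big(\sum_{j=1}^{p_k} D_{i_{k1}}\cdots S_{i_{kj}}\cdots D_{i_{kp_k}}\big)$, and by Lemma~\ref{isochain} this chain is a cycle in $\mathcal{R_*(K)}$ (equivalently, a cocycle situation dualized), so that the canonical extension $\overline{w'}$ of $w'$ over the top disc exists and, cellularly, fills $c'$ by the product cell $D_{i_{11}}\cdots D_{i_{1p_1}}\cdots D_{i_{np_n}}$ — this is precisely the assertion that $\partial(D_{i_{11}}\cdots D_{i_{np_n}})=c'$ under the differential $\partial=\sum u_i\partial/\partial v_i$ transported through $g$. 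Now the map defining $w$ on $S^{d(w)}\cong\partial(D^2_{i_{01}}\times\cdots\times D^2_{i_{0p_0}}\times D^{d(w')+1})$ is assembled from the canonical extensions: on the face where the $D^{d(w')+1}$-coordinate is in the boundary sphere it restricts to $\overline{[\mu_{i_{01}},\ldots,\mu_{i_{0p_0}}]}$ applied to the $w'$-sphere, and on the face where some $x_{i_{0j}}\in S^1$ it restricts to $\overline{[\mu_{i_{01}},\ldots,\widehat{\mu_{i_{0j}}},\ldots,\mu_{i_{0p_0}},w']}$. Reading off the cellular chain of this map of spheres and using that $\overline{[\mu_{i_{01}},\ldots,\widehat{\mu_{i_{0j}}},\ldots]}$ fills its sphere by the product of $D$-cells on the surviving indices, one obtains for the fundamental class of $S^{d(w)}$ the chain $\big(\sum_{j=1}^{p_0} D_{i_{01}}\cdots S_{i_{0j}}\cdots D_{i_{0p_0}}\big)\cdot\big(D_{i_{11}}\cdots D_{i_{np_n}}\big) \;+\; \big(D_{i_{01}}\cdots D_{i_{0p_0}}\big)\cdot c'$; pushing forward to $\zk$, the second summand is a boundary (it is $\partial$ of $D_{i_{01}}\cdots D_{i_{np_n}}$ up to sign, since that product cell fills it), so in homology only the first summand survives, which upon substituting $c'$ gives exactly~\eqref{f_hur_im_2}.

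Concretely I would organize the step as: (i) identify $S^{d(w)}$ with the boundary of a product of a $(D^2)^{p_0}$-block and one more disc $D$ of dimension $d(w')+1$, and decompose that boundary into the $p_0$ faces "$S^1$ in slot $i_{0j}$" plus the face "$\partial D$"; (ii) on the $\partial D$ face the map is $\overline{[\mu_{i_{01}},\ldots,\mu_{i_{0p_0}}]}\times w'$, whose cellular image is $(D_{i_{01}}\cdots D_{i_{0p_0}})\cdot c'$, a boundary in $\mathcal C_*(\zk)$; (iii) on the slot-$i_{0j}$ face the map is the canonical extension over the other $p_0-1$ disc factors of the depth-$n$ product with $\mu_{i_{0j}}$ omitted, which cellularly contributes $D_{i_{01}}\cdots S_{i_{0j}}\cdots D_{i_{0p_0}}$ times the same filling cell of $c'$, hence times $c'$ in homology; (iv) sum over $j$ and invoke Lemma~\ref{isochain} and Theorem~\ref{homology} to conclude. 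The main obstacle I anticipate is not conceptual but combinatorial: keeping the orientations/signs of all the boundary faces and shuffles consistent so that the product~\eqref{f_hur_im_2} comes out with all coefficients equal to $+1$, and verifying carefully that the "$\partial D$"-face contribution is genuinely a cellular boundary (i.e. that the product cell $D_{i_{01}}\cdots D_{i_{np_n}}$ lies in $\zk$, which requires $\{i_{01},\ldots,i_{np_n}\}\notin\sk$ is \emph{not} needed — rather each proper subset one meets as a face of the sphere \emph{is} in $\sk$ because $\sk$ realizes $w$); I would handle this by appealing to the fact that $\sk$ realizes $w$, so all the relevant lower faces are simplices of $\sk$ and the corresponding product cells lie in $\zk$, making the relative argument go through. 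Once the sign conventions are pinned down, the functoriality of $\widetilde\Delta$ guarantees that these cellular computations are compatible with the coalgebra structure, and the induction closes.
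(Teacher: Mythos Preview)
Your inductive framework and the base case $n=0$ are fine and match the paper. The inductive step, however, contains a genuine error that makes the argument break down.

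The disc associated to $w'$ in the higher-Whitehead construction has dimension $d(w')$, not $d(w')+1$: one views $w'\colon S^{d(w')}\to\zk$ as a map of pairs $(D^{d(w')},S^{d(w')-1})\to(\zk,pt)$ via the quotient $D^{d(w')}/S^{d(w')-1}\cong S^{d(w')}$. With your choice $D^{d(w')+1}$ the boundary sphere has dimension $2p_0+d(w')$, one more than $d(w)=2p_0+d(w')-1$. This off-by-one forces you into two false claims. First, you assert that the canonical extension $\overline{w'}$ exists; that would require $w'$ to be null-homotopic, which is neither assumed nor generally true (only the \emph{sub}products $[\mu_{i_{01}},\ldots,\widehat{\mu_{i_{0j}}},\ldots,\mu_{i_{0p_0}},w']$ and $[\mu_{i_{01}},\ldots,\mu_{i_{0p_0}}]$ must vanish). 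Second, you claim $\partial(D_{i_{11}}\cdots D_{i_{np_n}})=c'$; but the left side is a sum of $p_1+\cdots+p_n$ monomials each containing exactly one $S$, while $c'$ is a sum of $p_1\cdots p_n$ monomials each containing $n$ factors $S$. These agree only when $n=1$. Consequently your displayed chain $\big(\sum_j D\cdots S_{i_{0j}}\cdots D\big)\cdot(D_{i_{11}}\cdots D_{i_{np_n}})+(D_{i_{01}}\cdots D_{i_{0p_0}})\cdot c'$ does not even lie in the correct degree, and the ``boundary'' cancellation you propose cannot occur (indeed the cell $D_{i_{01}}\cdots D_{i_{np_n}}$ typically is not in $\zk$ at all).

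The paper's fix is simpler than any boundary argument. With the correct disc $D^{d(w')}$, the ``$\partial D$''-face is $(D^2)^{p_0}\times S^{d(w')-1}$, and on it the map collapses $S^{d(w')-1}$ to the basepoint; its image in $\zk$ is therefore $D^2_{i_{01}}\times\cdots\times D^2_{i_{0p_0}}\times pt$, of dimension $2p_0<d(w)$, so it contributes \emph{zero} to the top-dimensional cellular chain by pure dimension count. On each slot-$i_{0j}$ face the lifted map is, cellularly, the product of the inclusion $D_{i_{01}}\cdots S_{i_{0j}}\cdots D_{i_{0p_0}}$ with the cellular realisation of $w'$ itself (not an extension of $w'$), and by the inductive hypothesis the latter contributes $c'$. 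Summing over $j$ gives $\big(\sum_{j}D_{i_{01}}\cdots S_{i_{0j}}\cdots D_{i_{0p_0}}\big)\cdot c'$, which is exactly~\eqref{f_hur_im_2}. No homological cancellation is needed.
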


\begin{proof} We induct on the number $n$ of nested higher products.

	For $n = 0$ we have a single higher product
	\begin{align*}
	[\mu_1, \ldots, \mu_k]\colon S^{2k - 1} &{}\cong{} \partial (D_1^2\times\cdots\times D_k^2) \\ &{}\cong{} D_1^2\times\cdots\times D_{k-1}^2\times S_k^1 \cup \cdots \cup S_1^1\times D_2^2 \times\cdots\times D_k^2 \to (\C P^{\infty})^{\sk}
	\end{align*}
	which lifts to the inclusion of a subcomplex
	\begin{align*}
		[\mu_1, \ldots, \mu_k]\colon  D_1^2\times\cdots\times D_{k-1}^2\times S_k^1 \cup \cdots \cup S_1^1\times D_2^2 \times\cdots\times D_k^2 \to \mathcal {Z_K}.
	\end{align*}
	Therefore, the Hurewicz image is represented by the cellular chain~\eqref{f_hur_im_2}.

	Let $n = 1$, i.e. we have a product of the form $\left[\mu_{i_1}, \ldots \mu_{i_p}, [\mu_{j_1}, \ldots, \mu_{j_q}]\right], q > 1$.
	
By definition of the higher Whitehead products, the class
\[
\left[\mu_{i_1}, \ldots \mu_{i_p}, [\mu_{j_1}, \ldots, \mu_{j_q}]\right] \in \pi_{2(p+q-1)}((\C P^{\infty})^{\mathcal K})
\]
is represented by the composite map
\begin{align*}
	&S^{2(p+q-1)} \\ {}\cong{}
	&D_{i_1}^2\times\cdots\times D_{i_p}^2\times \partial D_{j_1\ldots j_q}^{2q-1}\cup\left(\left(\bigcup\limits_{k = 1}^p D_{i_1}^2\times\cdots\times D_{i_{k-1}}^2\times S_{i_k}^1\times D_{i_{k+1}}^2 \times\cdots\times D_{i_p}^2\right)\times D_{j_1\ldots j_q}^{2q-1}\right)
	\to \\ {}\to{}
	&D_{i_1}^2\times\cdots\times D_{i_p}^2\times pt \cup\left(\left(\bigcup\limits_{k = 1}^p D_{i_1}^2\times\cdots\times D_{i_{k-1}}^2\times S_{i_k}^1\times D_{i_{k+1}}^2 \times\cdots\times D_{i_p}^2\right)\times S_{j_1\ldots j_q}^{2q-1}\right)
	\to \\ {}\to{}
	&S_{i_1}^2\times\cdots\times S_{i_p}^2\times pt \cup\left(\left(\bigcup\limits_{k = 1}^p S_{i_1}^2\times\cdots\times S_{i_{k-1}}^2\times pt \times S_{i_{k+1}}^2 \times\cdots\times S_{i_p}^2\right)\times S_{j_1\ldots j_q}^{2q-1}\right)
	\to
	(\C P^{\infty})^{\mathcal K}.
\end{align*}
	
By the argument following \hyperref[cofib]{Theorem~\ref{cofib}}, the composite map above lifts to an inclusion of a cell subcomplex as described by the following diagram:
\begin{center}
	\begin{tikzpicture}[every node/.style={midway}]
	\label{d_hur_im_2}
	\matrix[column sep={4em,between origins},row sep={4em}] at (0, 0) {
		&&\node(A)   {$\mathcal {Z_K}$}; &&&& \node(B) {$(\C P^{\infty})^{\sk}$}; \\
		&&&& \node(C)   {$D_{i_1}^2\times\cdots\times D_{i_p}^2\times pt \cup\left(\left(\bigcup\limits_{k = 1}^p D_{i_1}^2\times\cdots\times S_{i_k}^1\times\cdots\times D_{i_p}^2\right)\times \left(\bigcup\limits_{k = 1}^q D_{j_1}^2\times\cdots\times S_{j_k}^1\times\cdots\times D_{j_q}^2\right)\right)$}; \\
		&&&& \node(D)   {$D_{i_1}^2\times\cdots\times D_{i_p}^2\times pt \cup\left(\left(\bigcup\limits_{k = 1}^p D_{i_1}^2\times\cdots\times S_{i_k}^1\times\cdots\times D_{i_p}^2\right)\times S_{j_1\ldots j_q}^{2q-1}\right).$};
		\\};
	\draw[->] (A) -- (B) node[anchor=south] {};
	\draw[dashed,  ->] (C) -- (A) node[anchor=west]  {};
	\draw[->] (C) -- (B) node[anchor=west]  {$\quad\left[\mu_{i_1}, \ldots \mu_{i_p}, \left[\mu_{j_1}, \ldots, \mu_{j_q}\right]\right]$};
	\draw[double distance=3pt] (C) -- (D) node[anchor=west]  {};
	\end{tikzpicture}
\end{center}
	In the union above, all cells except $D_{i_1}^2\times\cdots\times D_{i_p}^2\times pt$ have dimension $2(p+q-1)$ while $D_{i_1}^2\times\cdots\times D_{i_p}^2\times pt$ has dimension $2p < 2(p+q-1)$. Thus, the Hurewicz image of the lifted iterated higher Whitehead product in $H_{2(p+q-1)}(\mathcal{Z_K})$ is represented by the cellular chain
	\begin{align*}
		\left(\sum\limits_{k = 1}^{p} D_{i_1}\cdots D_{i_{k-1}}S_{i_k}D_{i_{k + 1}}\cdots D_{i_p} \right)\cdot\left(\sum\limits_{k = 1}^{q} D_{j_1}\cdots D_{j_{k-1}}S_{j_k}D_{j_{k + 1}}\cdots D_{j_p} \right).
	\end{align*}
	
	A similar argument applies in the general case. Consider the iterated higher Whitehead product $[\mu_{i_{01}}, \ldots, \mu_{i_{0p_0}}, w]\colon S^{2p_0 + d(w) - 1} \to (\C P^{\infty})^{\sk}$. Here
	\begin{align*}
		w = \left[\mu_{i_{11}}, \ldots, \mu_{i_{1p_1}}, \left[ \ldots, \big[\mu_{i_{(n-1)1}}, \ldots, \mu_{i_{(n-1) p_{n-1}}}, [\mu_{i_{n 1}}, \ldots, \mu_{i_{n p_n}}]\big] \ldots\right]\right].
	\end{align*}
	The sphere $S^{2p_0 + d(w) - 1}$ is decomposed into a union
	\begin{align*}
		D_{i_{01}}^2\times\cdots\times D_{i_{0p_0}}^2\times\partial D^{d(w)}\cup\left(\left(\bigcup\limits_{k = 1}^p D_{i_{01}}^2\times\cdots\times D_{i_{0(k-1)}}^2\times S_{i_{0k}}^1\times D_{i_{0(k+1)}}^2 \times\cdots\times D_{i_{0p_0}}^2\right)\times D_{w}^{d(w)}\right).
	\end{align*}
	By contracting $\partial D^{d(w)}$ to a point we obtain a cell subcomplex $X\subset \zk$. The inclusion $X\hookrightarrow\zk$ is a lift of the Whitehead product $[\mu_{i_{01}}, \ldots, \mu_{i_{0p_0}}, w]$. Arguing by induction we obtain that the Hurewicz image of the map $S^{d(w)}_w \to \mathcal{Z_K}$ is represented by the cellular chain
	\begin{align*}
	\prod\limits_{k = 1}^n \left(\sum\limits_{j = 1}^{p_k} D_{i_{k1}}\cdots D_{i_{k(j-1)}}S_{i_{kj}}D_{i_{k(j+1)}}\cdots D_{i_{kp_k}} \right).
	\end{align*}
	By dimensional reasons, $h([\mu_{i_{01}}, \ldots, \mu_{i_{0p_0}}, w]|_{D_{i_{01}}^2\times\cdots\times D_{i_{0p_0}}^2\times pt}) = 0 \in H_{2p_0 + d(w) - 1}(\mathcal{Z_K})$. Thus,
	\begin{align*}
	h([\mu_{i_{01}}, \ldots, \mu_{i_{0p_0}}, w]) = \prod\limits_{k = 0}^n \left(\sum\limits_{j = 1}^{p_k} D_{i_{k1}}\cdots D_{i_{k(j-1)}}S_{i_{kj}}D_{i_{k(j+1)}}\cdots D_{i_{kp_k}} \right).
	\end{align*}
\end{proof}

\begin{exm}
	\label{exam1}
	Consider the Whitehead product $\left[\mu_1, \mu_2, [\mu_3, \mu_4, \mu_5]\right]$. A simplicial complex $\sk \in W_{\Delta}$ realizing this product is described in \hyperref[j_0]{Example~\ref{j_0}} below. By \hyperref[hur_im_2]{Lemma~\ref{hur_im_2}}, the homology class $h\left(\left[\mu_1, \mu_2, [\mu_3, \mu_4, \mu_5]\right]\right) \in H_8(\mathcal{Z_K})$ is represented by the cellular chain
	\begin{align*}
		&D_1S_2D_3D_4S_5 + D_1S_2D_3S_4D_5 + D_1S_2S_3D_4D_5 \\{}+{}&S_1D_2D_3D_4S_5 + S_1D_2D_3S_4D_5 + S_1D_2S_3D_4D_5.
	\end{align*}
\end{exm}

\section{Operations on $W_{\Delta}$ and realization of Whitehead products}\label{sec_soper}

Let $\sk = \sk_1\cup_I\sk_2$ be a simplicial complex obtained by gluing $\sk_1$ and $\sk_2$ along a common face $I$
(we allow $I= \varnothing$, in which case $\sk_1\cup_I\sk_2 = \sk_1\sqcup\sk_2$).

\begin{prop}[compare~{\cite[Th.~8.2.1]{TT}}]\label{glue}
If $\sk_1,\sk_2\in W_{\Delta}$ then $\sk = \sk_1\cup_I\sk_2 \in W_{\Delta}$ for any common face $I$ of simplicial complexes $\sk_1$ and $\sk_2$.
\end{prop}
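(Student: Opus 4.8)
The plan is to reduce the statement to a combination of known facts: the decomposition of $\zk$ homology as a sum of reduced homologies of full subcomplexes (\hyperref[homology]{Theorem~\ref{homology}}), the fact that a simply-connected moment-angle complex is a wedge of spheres if and only if its homology is free and the cup product (equivalently the coproduct on homology) is trivial, together with the multiplicative behaviour of polyhedral products under gluing along a face. First I would recall that by \hyperref[cofib]{Theorem~\ref{cofib}} and standard facts $\zk$ is $2$-connected, so it suffices to show that $H_*(\zk)$ is torsion-free and that $\zk$ has the homotopy type of a wedge of spheres with each sphere coming from a (linear combination of) iterated higher Whitehead products; the hypothesis $\sk_1,\sk_2 \in W_\Delta$ gives us all of this for $\mathcal Z_{\sk_1}$ and $\mathcal Z_{\sk_2}$ separately.

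The key geometric input is that for $\sk = \sk_1\cup_I\sk_2$ one has, up to homotopy, a splitting
\[
\zk \simeq \mathcal Z_{\sk_1}\vee \mathcal Z_{\sk_2}\vee(\text{suspensions of smash products of lower pieces}),
\]
which in the wedge-of-spheres case collapses to $\zk\simeq \mathcal Z_{\sk_1}\vee \mathcal Z_{\sk_2}$ when the gluing face $I$ is nonempty (so that the vertex sets overlap in $I$), and to a statement involving the join / the extra $S^1$-factors when $I=\varnothing$. Concretely, I would use \hyperref[homology]{Theorem~\ref{homology}}: every full subcomplex $\sk_J$ of $\sk$ is a full subcomplex of $\sk_1$, or of $\sk_2$, or decomposes as $\sk_{1,J}\cup_{I\cap J}\sk_{2,J}$, and in the last case a Mayer--Vietoris argument (using that $\sk_{1,J}\cap\sk_{2,J}$ is the full simplex on $I\cap J$, hence contractible, or empty) shows $\widetilde H_*(\sk_J)\cong\widetilde H_*(\sk_{1,J})\oplus\widetilde H_*(\sk_{2,J})$ with a possible extra $\widetilde H_0$ contribution in the disjoint case. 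Summing over $J$ yields $H_*(\zk)\cong H_*(\mathcal Z_{\sk_1})\oplus H_*(\mathcal Z_{\sk_2})$ (again with an extra free summand in degree governed by $|I|$-shifted classes when $I=\varnothing$), so in particular $H_*(\zk)$ is free.

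Next I would check that the coproduct on $H_*(\zk)$ is trivial, so that $\zk$ really is a wedge of spheres and not just homologically one. For this I invoke the functoriality of the cellular diagonal approximation $\widetilde\Delta$ of \hyperref[cell_approx]{Construction~\ref{cell_approx}} with respect to the simplicial inclusions $\sk_i\hookrightarrow\sk$: a nontrivial coproduct on a homology class of $\zk$ supported on vertices inside $\sk_1$ (resp. $\sk_2$) would, by naturality, descend to a nontrivial coproduct on $\mathcal Z_{\sk_1}$ (resp. $\mathcal Z_{\sk_2}$), contradicting that these are wedges of spheres; and the Stanley--Reisner coalgebra description from \S\ref{algebraic} shows a product of two chains $u_Jv_I$ and $u_{J'}v_{I'}$ supported on disjoint vertex sets (one in $\sk_1\setminus I$, one in $\sk_2\setminus I$) vanishes because its support is not a simplex of $\sk$. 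Hence $\zk$ is homotopy equivalent to a wedge of spheres.

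Finally I would identify the wedge summands with iterated higher Whitehead products. Under the splitting $\zk\simeq\mathcal Z_{\sk_1}\vee\mathcal Z_{\sk_2}$ (or with the extra summands when $I=\varnothing$), each sphere of $\mathcal Z_{\sk_1}$ or $\mathcal Z_{\sk_2}$ is, by hypothesis, a lift of a linear combination of iterated higher Whitehead products in $\mu_i$'s with $i$ in the corresponding vertex set; composing with the inclusion $\mathcal Z_{\sk_j}\hookrightarrow\zk$ (which is induced by a simplicial inclusion, hence preserves the $\mu_i$ and their Whitehead brackets) exhibits the image as the same linear combination of iterated higher Whitehead products, now in $\zk$. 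The extra summands in the $I=\varnothing$ case are handled by the classical fact (the referenced \cite[Th.~8.2.1]{TT}) that joining disjoint pieces introduces summands realized by higher Whitehead products of the form $[w_1,w_2]$ with $w_1,w_2$ the products for the two pieces, using the explicit cell-subcomplex model for such brackets as in \S\ref{hurewicz}. I expect the main obstacle to be the bookkeeping in the $I=\varnothing$ (disjoint union) case: one must carefully track the extra $\widetilde H_0$-classes of $\sk_J$ when $J$ meets both components, check that the corresponding homology classes of $\zk$ are precisely the Hurewicz images computed by \hyperref[hur_im_2]{Lemma~\ref{hur_im_2}} for brackets $[\mu_a,\dots,\mu_b,w]$ straddling the two pieces, and verify the coproduct still vanishes on these — this is where the cellular-chain formula \eqref{f_hur_im_2} and the support argument in the Stanley--Reisner coalgebra do the real work.
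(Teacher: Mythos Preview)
Your overall strategy---compute $H_*(\zk)$ via full subcomplexes using \hyperref[homology]{Theorem~\ref{homology}}, then exhibit explicit iterated Whitehead products whose Hurewicz images generate---matches the paper's. But two steps in your outline are genuinely wrong and would not go through as written.

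First, the claimed splitting $\zk\simeq\mathcal Z_{\sk_1}\vee\mathcal Z_{\sk_2}$ for $I\neq\varnothing$ is false. Take $\sk_1=\{1,2\}$, $\sk_2=\{2,3\}$ (two pairs of points) glued along $I=\{2\}$; then $\sk$ is three disjoint points, $\mathcal Z_{\sk_1}\simeq\mathcal Z_{\sk_2}\simeq S^3$, but $\zk\simeq (S^3)^{\vee 3}\vee(S^4)^{\vee 2}$. The point is that full subcomplexes $\sk_J$ with $J$ meeting both $V(\sk_1)\setminus I$ and $V(\sk_2)\setminus I$ contribute homology to $\zk$ that is \emph{not} accounted for by either $\mathcal Z_{\sk_i}$ alone: even when $J\cap I\neq\varnothing$, the class coming from $\widetilde H_p((\sk_1)_{J_1})$ lands in degree $p+|J|>p+|J_1|$, and this happens for every choice of $J_2$ compatible with $J_1$. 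The paper handles exactly this: a generator $w$ of $\widetilde H_*((\sk_1)_{J_1})$ viewed inside $\widetilde H_*(\sk_J)$ corresponds to the nested bracket $[\mu_{k_1},[\mu_{k_2},[\ldots[\mu_{k_r},w]\ldots]]]$ with $\{k_1,\ldots,k_r\}=J\setminus J_1$, not to $w$ itself; and the ``extra'' $\widetilde H_0$ classes (which arise whenever $J\cap I=\varnothing$, regardless of whether $I$ itself is empty) are the fully nested brackets $[\mu_{j_3},[\ldots[\mu_{j_{|J|}},[\mu_{j_1},\mu_{j_2}]]\ldots]]$, not products of the form $[w_1,w_2]$ as you suggest.

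Second, the detour through ``free homology plus trivial coproduct implies wedge of spheres'' is both unnecessary and not obviously valid: vanishing cup products do not by themselves rule out nontrivial $k$-invariants or Massey products. The paper avoids this entirely by constructing an explicit map from a wedge of spheres (the wedge of all the iterated Whitehead products just listed) into $\zk$ and checking, via \hyperref[hur_im_2]{Lemma~\ref{hur_im_2}} and the chain formulae, that it induces an isomorphism on homology; Whitehead's theorem then finishes. You should drop the coproduct argument and instead enumerate the correct (larger) set of brackets and match their Hurewicz images against the basis of $H_*(\zk)$ coming from \hyperref[homology]{Theorem~\ref{homology}}.
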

\begin{proof}
Consider a full subcomplex $\sk_J$ in $\sk = \sk_1\cup_I\sk_2$. Let $V(\sk_1)$, $V(\sk_2)$ be the sets of vertices of the simplicial complexes $\sk_1, \sk_2$. Denote $J_1 = V(\sk_1)\cap J, J_2 = V(\sk_2)\cap J$. Consider two cases. If $J\cap I = \varnothing$, then $\sk_J$ is the disjoint union $\sk_{J_1}\sqcup \sk_{J_2}$, and we have
\begin{align*}
   	H_p(\sk_J) \cong H_p(\sk_{J_1}) \oplus H_p(\sk_{J_2}).
\end{align*}
If $J \cap I \neq \varnothing$ then $\sk_J$ is homotopy equivalent to the wedge $\sk_{J_1}\vee \sk_{J_2}$, and
\begin{align*}
    \widetilde H_p(\sk_J) \cong \widetilde H_p(\sk_{J_1}) \oplus \widetilde H_p(\sk_{J_2}).
\end{align*}
In both cases generators of each summand map to generators of the corresponding homology groups $H_*(\mathcal Z_{\sk})$ under the homomorphism of \hyperref[homology]{Theorem~\ref{homology}}.
	
Let $\{\sigma_{\alpha}(J_1, p)\}_{\alpha \in A}, \{\sigma_{\beta}(J_2, p)\}_{\beta \in B}$ be sets of  simplicial chains which represent bases of the free abelian groups $\widetilde H_p\big((\sk_1)_{J_1}\big)$, $\widetilde H_p\big((\sk_2)_{J_2}\big)$ respectively. Let $\{\chi_{\alpha}(J_1, p)\}_{\alpha \in A}$, $\{\chi_{\beta}(J_2, p)\}_{\beta \in B}$ be the images of these bases under the map $C_{p-1}((\sk_l)_{J_l}) \to \mathcal C_{p + |J_l|}(\mathcal Z_{\sk_l}), l = 1, 2$, of \hyperref[homology]{Theorem~\ref{homology}}. Considering the same bases as elements of $\widetilde H_*(\sk_J)$, we obtain that they map under the homomorphism $C_{p-1} \to \mathcal C_{p+|J|}$ to the cellular chains
\begin{align}\label{chain1}
    \prod\limits_{j\in J\setminus J_2} S_{j}\cdot\{\chi_{\alpha}(J_1, p)\}_{\alpha \in A}\qquad \text{and}\qquad \prod\limits_{j\in J\setminus J_1} S_{j}\{\chi_{\beta}(J_2, p)\}_{\beta \in B}
\end{align}
respectively. When $J_1 \neq\varnothing, J_2\neq\varnothing$, but $J\cap I = \varnothing$, we get a new generator of $\widetilde H_0(\sk_J)$ which is represented by a simplicial chain $j_1 + j_2$ with $j_1 \in J_1$ and $j_2 \in J_2$; this one is different from generators of homology of $\sk_{J_1}$ and $\sk_{J_2}$. The corresponding cellular chain in $\mathcal C_*(\zk)$ is
\begin{align}\label{chain2}
\prod\limits_{j \neq j_1, j_2} S_j\cdot (D_{j_1}S_{j_2} + S_{j_1}D_{j_2}).
\end{align}

Let $\{w_{\alpha}(J_1, p)\}_{\alpha \in A}$, $\{w_{\beta}(J_2, p)\}_{\beta \in B}$ be the Whitehead products corresponding to the bases $\{\sigma_{\alpha}(J_1, p)\}_{\alpha \in A}$, $\{\sigma_{\beta}(J_2, p)\}_{\beta \in B}$. The Hurewicz images of the products
\begin{align*}
    &\Big[\mu_{k_1}, \big[\mu_{k_2},\big[\ldots\big[\mu_{k_{r-1}},[\mu_{k_r}, w_{\alpha}(J_1, p)]\big]\ldots\Big] \qquad \text{and} &&\Big[\mu_{l_1}, \big[\mu_{l_2},\big[\ldots\big[\mu_{l_{s-1}},[\mu_{l_s}, w_{\beta}(J_2, p)]\big]\ldots\Big] \\
    &\text{for } J\setminus J_2 = \{k_1,\ldots, k_r\} &&\text{for } J\setminus J_1 = \{l_1,\ldots, l_s\}
\end{align*}
are represented by chains~\eqref{chain1}. Chain~\eqref{chain2} represents the Hurewicz image of the  product 
\[
\Big[\mu_{j_3}, \big[\mu_{j_4},\big[\ldots\big[\mu_{j_{|J|}},[\mu_{j_1}, \mu_{j_2}]\big]\ldots\Big], J = \{j_1,\ldots, j_{|J|}\}.
\]

The wedge sum of the Whitehead products above
\begin{align*}
    \Big(\bigvee_{\substack{J_1, J_2 \\ J_1\cap I = J_2\cap I}}\Big(\bigvee_{p \geqslant 0}\big(\bigvee_{\alpha\in A}S^{p + |J|+1}\vee\bigvee_{\beta\in B}S^{p + |J| + 1}\big) \Big)\Big) \vee
    \bigvee_{\substack{J_1, J_2 \neq\varnothing \\ J_1\cap I = J_2\cap I = \varnothing}} S^{|J| + 1} \longrightarrow \mathcal Z_{\sk}
\end{align*}
induces an isomorphism in the homology. As all the spaces involved are simply connected, it is a homotopy equivalence.
\end{proof}

\begin{figure}[!h]
	\begin{minipage}[h]{0.49\linewidth}	
		\begin{center}
			\begin{tikzpicture}
			\coordinate (A1) at ( 0cm, 4cm);
			\coordinate (A2) at ( 0cm,-4cm);
			\coordinate (A3) at (-3cm, 0cm);
			\coordinate (A4) at ( 1cm,-1cm);
			\coordinate (A5) at ( 3cm,  0cm);
			
			\fill[gray!20, opacity = 0.3] (A1) -- (A4) -- (A3) -- cycle;
			\fill[gray!20, opacity = 0.3] (A1) -- (A4) -- (A5) -- cycle;
			\fill[gray!20, opacity = 0.7]  (A2) -- (A4) -- (A5) -- cycle;
			\fill[gray!20, opacity = 0.5]  (A2) -- (A4) -- (A3) -- cycle;
			
			\foreach \j in {3, 4, 5}	
			{
				\draw[thick] (A1) -- (A\j) ;
				\draw[thick] (A2) -- (A\j) ;
			}
			\draw[thick] (A3) -- (A4) -- (A5);
			\draw[thick, dashed] (A1) -- (A2); 
			\draw[thick, dashed] (A3) -- (A5); 
			
			\foreach \i in {1, 2, ..., 5}
			{
				\draw[fill=black] (A\i) circle (0.2em);
			}
			
			\draw[fill = black] (0cm, -0.3cm) circle (0.1em);
			
			\draw (A1) node[above right] {\textbf 1};
			\draw (A2) node[right] 	      {\textbf 2};
			\draw (A3) node[above left] {\textbf 3};
			\draw (A4) node[below right] {\textbf 4};
			\draw (A5) node[above right] {\textbf 5};
			
			\end{tikzpicture}
			\caption{$\sk = \sj_1(\partial\Delta^2)$}\label{pic1}
		\end{center}
	\end{minipage}
	\hfill
	\begin{minipage}[h]{0.49\linewidth}	
		\begin{center}
			\begin{tikzpicture}
			\coordinate (A1) at ( 0cm, 4cm);
			\coordinate (A2) at ( 0cm,-4cm);
			\coordinate (A3) at (-3cm, 0cm);
			\coordinate (A4) at ( 1cm,-1cm);
			\coordinate (A5) at ( 3cm,  0cm);
			
			\fill[gray!20, opacity = 0.3] (A1) -- (A4) -- (A3) -- cycle;
			\fill[gray!20, opacity = 0.3] (A1) -- (A4) -- (A5) -- cycle;
			\fill[gray!20, opacity = 0.7]  (A2) -- (A4) -- (A5) -- cycle;
			\fill[gray!20, opacity = 0.5]  (A2) -- (A4) -- (A3) -- cycle;
			
			\foreach \j in {3, 4, 5}	
			{
				\draw[thick] (A1) -- (A\j) ;
				\draw[thick] (A2) -- (A\j) ;
			}
			\draw[thick] (A3) -- (A4) -- (A5);
			\draw[thick, dashed] (A1) -- (A2); 
			\draw[thick, dashed] (A3) -- (A5); 
			\draw[pattern=dots] (A1) -- (A2) -- (A3) -- cycle;
			
			\foreach \i in {1, 2, ..., 5}
			{
				\draw[fill=black] (A\i) circle (0.2em);
			}
			
			\draw[fill = black] (0cm, -0.3cm) circle (0.1em);
			
			\draw (A1) node[above right] {\textbf 1};
			\draw (A2) node[right] 	      {\textbf 2};
			\draw (A3) node[above left] {\textbf 3};
			\draw (A4) node[below right] {\textbf 4};
			\draw (A5) node[above right] {\textbf 5};
			
			\end{tikzpicture}
			\caption{$\mathcal L = \sj_1(\partial\Delta(3, 4, 5))\cup\{1, 2, 3\}$}\label{pic2}
		\end{center}
	\end{minipage}
\end{figure}	

\begin{theorem}
\label{soper}
	Let $\sk \in W_{\Delta}$ be a simplicial complex. Then the simplicial complex
	\begin{align*}
	\mathcal {J}_n(\sk) = (\partial\Delta^n*\sk)\cup \Delta^n
	\end{align*}
	belongs to $W_{\Delta}$.
\end{theorem}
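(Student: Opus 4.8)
The plan is to follow the strategy of \hyperref[glue]{Proposition~\ref{glue}}: compute $H_*(\mathcal Z_{\mathcal L})$ for $\mathcal L=\mathcal J_n(\sk)$ via \hyperref[homology]{Theorem~\ref{homology}}, realize every homology generator by a lift of an iterated higher Whitehead product (or a $\Z$-linear combination of such), and then observe that the resulting map from a wedge of spheres is a homology isomorphism of simply connected spaces, hence a homotopy equivalence. Throughout write $\{0,\dots,n\}$ for the vertices of $\partial\Delta^n$ and $[m]$ for those of $\sk$. We may assume $n\geqslant 1$: for $n=0$ one has $\mathcal J_0(\sk)=\sk\sqcup\{pt\}$, which lies in $W_{\Delta}$ by \hyperref[glue]{Proposition~\ref{glue}}. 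Put $\Theta=\sum_{k=0}^n D_0\cdots S_k\cdots D_n$, and note that $\Theta=\pm\,\partial(D_0\cdots D_n)$ while $D_0\cdots D_n$ is itself a cell of $\mathcal Z_{\mathcal L}$ because $\Delta^n\in\mathcal L$.

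First I would analyse the full subcomplexes. For $J=J_0\sqcup J_1$ with $J_0\subseteq\{0,\dots,n\}$ and $J_1\subseteq[m]$, the definition of $\mathcal L$ gives $\mathcal L_J=\Delta(J_0)*\sk_{J_1}$ when $J_0\subsetneq\{0,\dots,n\}$ (which is contractible unless $J_0=\varnothing$, when it equals $\sk_{J_1}$) and $\mathcal L_J=\mathcal J_n(\sk_{J_1})$ when $J_0=\{0,\dots,n\}$. Since $\sk\in W_{\Delta}\subseteq B_{\Delta}$, each $\widetilde H_*(\sk_{J_1})$ is a direct summand of the free group $H_*(\mathcal Z_\sk)$, hence free; so the Künneth formula for joins yields $\widetilde H_*(\partial\Delta^n*\sk_{J_1})\cong\widetilde H_{*-n}(\sk_{J_1})$, the map $\widetilde H_{n-1}(\partial\Delta^n)\to\widetilde H_{n-1}(\partial\Delta^n*\sk_{J_1})=\widetilde H_{-1}(\sk_{J_1})=0$ in the cofibre sequence $\partial\Delta^n\hookrightarrow\partial\Delta^n*\sk_{J_1}\to\mathcal J_n(\sk_{J_1})$ vanishes, and hence $\widetilde H_k(\mathcal J_n(\sk_{J_1}))\cong\widetilde H_{k-n}(\sk_{J_1})\oplus\widetilde H_{k-1}(\partial\Delta^n)$ for $J_1\neq\varnothing$, the second summand being $\Z$ precisely for $k=n$; explicit cycle representatives are $\partial\Delta^n*\xi$ (for $\xi$ a cycle of $\sk_{J_1}$) and $\Delta^n-(\partial\Delta^n*\{v\})$ (for any vertex $v\in J_1$). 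Feeding this into \hyperref[homology]{Theorem~\ref{homology}} one gets $H_*(\mathcal Z_{\mathcal L})\cong\mathbf A\oplus\mathbf B\oplus\mathbf C$, all free, where $\mathbf A$ is the image of the split injection $H_*(\mathcal Z_\sk)\hookrightarrow H_*(\mathcal Z_{\mathcal L})$ induced by $\sk=\mathcal L_{[m]}\hookrightarrow\mathcal L$, $\mathbf B\cong H_{*-2n-1}(\mathcal Z_\sk)$ is a shifted copy assembled from the subgroups $\widetilde H_{*-n}(\sk_{J_1})$ of the $\mathcal L_J=\mathcal J_n(\sk_{J_1})$, and $\mathbf C\cong\bigoplus_{\varnothing\neq J_1\subseteq[m]}\Z$ has one generator in degree $2n+2+|J_1|$ for every nonempty $J_1$.

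The realization step is the core of the argument. The summand $\mathbf A$ is realized by the products realizing the spheres of $\mathcal Z_\sk$, composed with $\mathcal Z_\sk\hookrightarrow\mathcal Z_{\mathcal L}$. For $\mathbf B$: given an iterated higher Whitehead product $w$ on a vertex set $J_1\subseteq[m]$ realizing a wedge summand of $\mathcal Z_\sk$ (the linear-combination case being handled summand by summand), the product $[\mu_0,\dots,\mu_n,w]$ is a well-defined higher Whitehead product in $\mathcal Z_{\mathcal L}$: the sub-product $[\mu_0,\dots,\mu_n]$ is trivial and extends canonically over the cell $D_0\cdots D_n$ since $\Delta^n\in\mathcal L$, while every sub-product $[\{\mu_s\}_{s\in S},w]$ with $S\subsetneq\{0,\dots,n\}$ is already trivial, with canonical extension, inside $\mathcal Z_{\mathcal L_{S\cup J_1}}=\mathcal Z_{\Delta(S)*\sk_{J_1}}\cong(D^2)^{|S|}\times\mathcal Z_{\sk_{J_1}}$, where each $\mu_s$ bounds the disc $D^2_s$ and a higher Whitehead product one of whose arguments is canonically nullhomotopic is canonically nullhomotopic. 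By the argument of \hyperref[hur_im_2]{Lemma~\ref{hur_im_2}} (contracting $\partial D^{d(w)}$ to a point) the Hurewicz image of the lift of $[\mu_0,\dots,\mu_n,w]$ is represented by $\Theta\cdot c$ with $c$ a representative of $h(w)$, which is exactly the matching generator of $\mathbf B$; it is nonzero in homology because $D_0\cdots D_n\cdot c$ is not a cellular chain of $\mathcal Z_{\mathcal L}$ (the set $\{0,\dots,n\}$ together with the support of $c$ is not a face of $\mathcal L$). For $\mathbf C$: with $J_1=\{j_1<\dots<j_k\}$, use the product $[\mu_{j_2},[\mu_{j_3},[\dots,[\mu_{j_k},[\mu_0,\dots,\mu_n,\mu_{j_1}]]\dots]]]$ (just $[\mu_0,\dots,\mu_n,\mu_{j_1}]$ when $k=1$); here the innermost $[\mu_0,\dots,\mu_n,\mu_{j_1}]$ is a genuine higher Whitehead product, well defined because $(\{0,\dots,n\}\setminus\{l\})\cup\{j_1\}$ (for all $l$) and $\{0,\dots,n\}$ are faces of $\mathcal L$, and the outer brackets are ordinary binary Whitehead products, which impose no conditions. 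By \hyperref[hur_im_2]{Lemma~\ref{hur_im_2}} its Hurewicz image is $\prod_{j\in J_1\setminus\{j_1\}}S_j\cdot h([\mu_0,\dots,\mu_n,\mu_{j_1}])$, which matches the generator of $\mathbf C$ in degree $2n+2+|J_1|$. Wedging these spheres and mapping to $\mathcal Z_{\mathcal L}$ gives a homology isomorphism; since both spaces are simply connected, it is a homotopy equivalence, and therefore $\mathcal J_n(\sk)\in W_{\Delta}$.

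I expect the main difficulty to be the realization step, on two counts. First, proving that $[\mu_0,\dots,\mu_n,w]$ and $[\mu_0,\dots,\mu_n,\mu_{j_1}]$ are genuinely well-defined higher Whitehead products in $\mathcal Z_{\mathcal L}$: this draws on the full-subcomplex analysis above to exhibit both the face $\Delta^n\in\mathcal L$ (which kills $[\mu_0,\dots,\mu_n]$) and the \emph{cone} full subcomplexes $\Delta(S)*\sk_{J_1}$ (which kill the mixed sub-products), and then requires a careful verification that the canonical extensions obtained this way are compatible on overlaps, as the inductive definition of higher Whitehead products demands. Second, the sign- and index-bookkeeping needed to match the Hurewicz images produced by \hyperref[hur_im_2]{Lemma~\ref{hur_im_2}} with the explicit cycle representatives of the generators of $\mathbf B$, and especially of $\mathbf C$ --- the \emph{new} spheres, which come not from $\mathcal Z_\sk$ but from the class of $\partial\Delta^n$ and so require the explicit splitting of $\widetilde H_*(\mathcal J_n(\sk_{J_1}))$. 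The homology computation of the first two paragraphs and the closing Whitehead-theorem argument are routine.
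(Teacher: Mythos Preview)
Your proposal is correct and follows essentially the same approach as the paper: the same case analysis of full subcomplexes $\mathcal L_{J_0\sqcup J_1}$ (contractible for $\varnothing\subsetneq J_0\subsetneq\{0,\dots,n\}$, equal to $\sk_{J_1}$ for $J_0=\varnothing$, and $\simeq\Sigma^n\sk_{J_1}\vee S^n$ for $J_0=\{0,\dots,n\}$), the same realizing products $[\mu_0,\dots,\mu_n,w]$ for the $\Sigma^n\sk_{J_1}$ summand and the nested binary products wrapped around $[\mu_0,\dots,\mu_n,\mu_{j_1}]$ for the $S^n$ summand, and the same closing homology-isomorphism argument. You are more explicit than the paper in a couple of places---separating out the $\mathbf A$ summand coming from $J_0=\varnothing$, and verifying that the sub-products of $[\mu_0,\dots,\mu_n,w]$ vanish via the cone subcomplexes $\Delta(S)*\sk_{J_1}$---but these are elaborations of the same argument rather than a different route.
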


Note that $\sj_n(\sk) \simeq \Sigma^n(\sk)\vee S^n$.

The case $\sk = \partial\Delta^2$ and $n = 1$ is shown in \hyperref[pic1]{Figure~
\ref{pic1}}.

\begin{proof}
	Denote $\mathcal{L} = \mathcal {J}_n(\sk)$ and let $V(\sk) = I$, $V(\Delta^n) = I_1$ be the sets of vertices. By \hyperref[homology]{Theorem~\ref{homology}}, homology of $\mathcal{Z_L}$ comes from homology of full subcomplexes $\mathcal L_{J_1, J} = ((\partial\Delta^n)_{J_1}*\sk_J)\cup \Delta_{J_1}^n$ where $J_1 \subset I_1, J \subset I$.

	If $J_1\subsetneq I_1$ is a nonempty proper subset, then the complex $\mathcal L_{J_1, J}$ is topologically contractible. So, in this case $\widetilde H_*(\mathcal L_{J_1, J}) = 0$. If $J_1 = \varnothing$, then the corresponding full subcomplex is $\mathcal L_{\varnothing, J} = \sk_J$. Hence, $\widetilde H_*(\mathcal L_{\varnothing, J}) \cong \widetilde H_*(\sk_J)$. Finally, when $J_1 = I_1$, we have
\begin{align}\label{formula}
    \mathcal L_{I_1, J} = (\partial\Delta^n*\sk_J)\cup \Delta^n \simeq \Sigma^n(\sk_J)\vee S^n.
\end{align}
Hence, we get $\widetilde H_*(\mathcal L_{I_1, J}) \cong \widetilde H_{*-n} (\sk_J)\oplus \widetilde H_*(S^n)$, where the generator of the second summand is represented by the boundary of the $(n+1)$-simplex $\Delta(I_1, j)$.

We shall show that all generators of $H_*(\mathcal{Z_L})$ are represented by the cellular chains~\eqref{f_hur_im_2}, and therefore are the Hurewicz images of iterated higher Whitehead products by \hyperref[hur_im_2]{Lemma~\ref{hur_im_2}}. The generator of $H_*(\mathcal{Z_L})$ corresponding to the wedge summand $S^n$ in~\eqref{formula} is represented by the cellular chain
\begin{align*}
    \prod\limits_{k = 1}^{m-1}S_{j_k}\cdot\Big(D_{i_1}\cdots D_{i_{n+1}}S_j + (\sum\limits_{k = 1}^{n+1}D_{i_1}\cdots S_{i_k}\cdots D_{i_{n+1}})D_j\Big),
\end{align*}
where $I_1 = \{i_1,\ldots, i_{n+1}\}$ and $J = \{j_1,\ldots, j_{m-1}, j\}$. It is the Hurewicz image of the Whitehead product
\begin{align*}
	\Big[\mu_{j_1}, \big[\mu_{j_2},\big[\ldots\big[\mu_{j_{m-1}},[\mu_{i_1},\ldots, \mu_{i_{n+1}}, \mu_j]\big]\ldots\Big].
\end{align*}
Finally, every generator of $H_*(\mathcal{Z_L})$ corresponding to the wedge summand $\Sigma^n(\sk_J)$ in~\eqref{formula} is represented by the cellular chain
\begin{align}\label{chain}
	\Big(\sum\limits_{k=1}^{n+1}D_{i_1}\cdots S_{i_k}\cdots D_{i_{n+1}}\Big)h(w),
\end{align}
where $w$ is the Whitehead product which maps to corresponding generator of $\widetilde H_{*-n}(\sk_J)\subset H_*(\mathcal{Z_K})$. The chain~\eqref{chain} represents the class $h([\mu_{i_1},\ldots,\mu_{i_{n+1}}, w])$.

The wedge sum of the Whitehead products described above is a map from a wedge of spheres to $\mathcal{Z_L}$ that induces an isomorphism of homology groups. Hence, it is a homotopy equivalence.
\end{proof}

\begin{theorem}
\label{exist}
	For any iterated higher Whitehead product
	\begin{align}\label{product}
		w = \Big[\mu_{i_{01}}, \ldots, \mu_{i_{0p_0}}, [ \ldots [\mu_{i_{n 1}}, \ldots, \mu_{i_{n p_n}}] \ldots\Big],
	\end{align}
	there exists a simplicial complex $\sk \in W_{\Delta}$ that realizes $w$.
\end{theorem}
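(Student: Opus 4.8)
The plan is to build the realizing complex by structural induction on the bracket structure of $w$, using Theorem~\ref{soper} (the operation $\sj_n$) together with Proposition~\ref{glue} (gluing along a face) and the join operation as the basic building blocks. The key observation is that $\sj_n(\sk) = (\partial\Delta^n * \sk)\cup\Delta^n$ produces, at the level of moment-angle complexes, a wedge where one summand corresponds to the Whitehead product $[\mu_{i_1},\ldots,\mu_{i_{n+1}},w']$ whenever $w'$ is realized by $\sk$ (this is exactly what the proof of Theorem~\ref{soper} shows via chain~\eqref{chain}). So $\sj_n$ lets me "wrap" an already-realized product inside one more bracket together with $n+1$ new generators $\mu_{i_1},\ldots,\mu_{i_{n+1}}$.

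\medskip

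\textbf{Outline of the induction.} First I would record the base case: the innermost bracket $[\mu_{i_{n1}},\ldots,\mu_{i_{np_n}}]$ is realized by $\partial\Delta^{p_n-1}$, since $\mathcal Z_{\partial\Delta^{p_n-1}}\simeq S^{2p_n-1}$ is the lift of $[\mu_{i_{n1}},\ldots,\mu_{i_{np_n}}]$ (Definition~\ref{wdelta}). For the inductive step, suppose $w' = [\ldots[\mu_{i_{n1}},\ldots,\mu_{i_{np_n}}]\ldots]$ with brackets down to level $k+1$ is realized by some $\sk'\in W_\Delta$ on a vertex set disjoint from $\{i_{k1},\ldots,i_{kp_k}\}$. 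Applying $\sj_{p_k-1}(\sk')$ with the new vertex set $\{i_{k1},\ldots,i_{kp_k}\}$ playing the role of $I_1$ (so $\partial\Delta^{p_k-1}$ is the boundary of the simplex on those vertices), Theorem~\ref{soper} gives $\sj_{p_k-1}(\sk')\in W_\Delta$, and by the chain computation~\eqref{chain} in its proof, one wedge summand of $\mathcal Z_{\sj_{p_k-1}(\sk')}$ is the Hurewicz image of $[\mu_{i_{k1}},\ldots,\mu_{i_{kp_k}},w']$ — precisely the next bracket level. Iterating from $k=n-1$ down to $k=0$ realizes the full product $w$. One should also note disjointness of index sets can be arranged by relabelling, since only the combinatorial type of the bracket matters for realizability as a wedge summand.

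\medskip

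\textbf{Main obstacle.} The delicate point is that Theorem~\ref{soper} as stated only shows $\sj_n(\sk)\in W_\Delta$, i.e. that \emph{every} sphere in the wedge $\mathcal Z_{\sj_n(\sk)}$ is a lift of \emph{some} linear combination of iterated higher Whitehead products; I need the sharper fact that the \emph{specific} summand coming from $\Sigma^n(\sk_J)$ with $J = V(\sk)$ is realized by the \emph{specific} product $[\mu_{i_1},\ldots,\mu_{i_{n+1}},w]$ where $w$ realizes $\sk$ itself. This does follow from the proof of Theorem~\ref{soper} — the chain~\eqref{chain} is exactly $\big(\sum_k D_{i_1}\cdots S_{i_k}\cdots D_{i_{n+1}}\big)h(w)$, which by Lemma~\ref{hur_im_2} is the Hurewicz image of $[\mu_{i_1},\ldots,\mu_{i_{n+1}},w]$ — but it needs to be extracted explicitly rather than quoted as a black box. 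A secondary subtlety is checking that when the outermost part $[\mu_{i_{01}},\ldots,\mu_{i_{0p_0}},w']$ has $p_0$ outer arguments, the construction $\sj_{p_0-1}$ with $n+1 = p_0$ produces a sphere of the correct dimension $2(p_0+\cdots+p_n)-(n+1)$ — a dimension bookkeeping that I would verify by induction alongside the homology argument, matching the formula in Lemma~\ref{hur_im_2}. I expect the write-up to consist mainly of setting up notation for the nested vertex sets and then invoking the proof of Theorem~\ref{soper} at each level; the conceptual content is entirely in that one step.
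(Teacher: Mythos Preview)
Your proposal is correct and is essentially the paper's own argument: the paper simply takes $\mathcal L = \sj_{p_0-1}\circ\sj_{p_1-1}\circ\cdots\circ\sj_{p_{n-1}-1}(\partial\Delta^{p_n-1})$, invokes Theorem~\ref{soper} to get $\mathcal L\in W_\Delta$, and observes via the Hurewicz image (Lemma~\ref{hur_im_2} and chain~\eqref{chain}) that the top-dimensional sphere is realized by $w$. Your ``main obstacle'' is exactly the content the paper packages into the phrase ``as shown by considering the Hurewicz homomorphism''; note also that Proposition~\ref{glue} and the join, while mentioned in your plan, are not actually needed here.
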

\begin{proof}
Consider the simplicial complex $\mathcal L = \sj_{p_0-1}\circ\sj_{p_1-1}\circ\cdots\circ\sj_{p_{n-1}-1}(\partial\Delta^{p_n - 1})$. By \hyperref[soper]{Theorem~\ref{soper}}, the complex $\mathcal L$ belongs to $W_{\Delta}$. The maximal dimensional sphere in the wedge $\mathcal{Z_L}$ has dimension $2(p_0 + \cdots +p_n) - (n+1)$ and it is realized by $w$, as shown by considering the Hurewicz homomorphism.
\end{proof}

\begin{table}[!h]
	\renewcommand{\arraystretch}{1.7}
	\begin{center}
		\begin{tabular}{||c||c|c|c||}
			\hline
			\multirow{4}{*}{$H_5(\mathcal{Z_K})$}
			& $\Z$ & $D_3D_4S_5 + D_3S_4D_5 + S_3D_4D_5$ & $[\mu_3, \mu_4, \mu_5]$ \\
			\cline{2-4}
			& $\Z$ & $D_1D_2S_3 + D_1S_2D_3 + S_1D_2D_3$ & $[\mu_1, \mu_2, \mu_3]$ \\
			\cline{2-4}
			& $\Z$ & $D_1D_2S_4 + D_1S_2D_4 + S_1D_2D_4$ & $[\mu_1, \mu_2, \mu_4]$ \\
			\cline{2-4}
			& $\Z$ & $D_1D_2S_5 + D_1S_2D_5 + S_1D_2D_5$ & $[\mu_1, \mu_2, \mu_5]$\\
			\hline
			\multirow{3}{*}{$H_6(\mathcal{Z_K})$}
			& $\Z$ & $S_4(D_1D_2S_3 + D_1S_2D_3 + S_1D_2D_3)$ & $\big[\mu_4,[\mu_1, \mu_2, \mu_3]\big]$ \\
			\cline{2-4}
			& $\Z$ & $S_5(D_1D_2S_3 + D_1S_2D_3 + S_1D_2D_3)$ & $\big[\mu_5,[\mu_1, \mu_2, \mu_3]\big]$ \\
			\cline{2-4}
			& $\Z$ & $S_5(D_1D_2S_4 + D_1S_2D_4 + S_1D_2D_4)$ & $\big[\mu_5,[\mu_1, \mu_2, \mu_4]\big]$\\
			\hline
			$H_7(\mathcal{Z_K})$ &	$\Z$ & $S_5S_4(D_1D_2S_3 + D_1S_2D_3 + S_1D_2D_3)$ & $\Big[\mu_4,\big[\mu_5,[\mu_1,\mu_2,\mu_3]\big]\Big]$
			\\
			\hline
			$H_8(\mathcal{Z_K})$ &	$\Z$ &	$(D_1S_2+S_1D_2)(D_3D_4S_5 + D_3S_4D_5 + S_3D_4D_5)$ & $\big[\mu_1,\mu_2,[\mu_3,\mu_4,\mu_5]\big]$
			\\
			\hline
		\end{tabular}
		\caption{Homology of $\mathcal{Z_K}$ for $\sk = \sj_1(\partial\Delta^2)$ (see~\hyperref[pic1]{Figure~\ref{pic1}})}\label{t1}
	\end{center}
\end{table}
\begin{exm}\label{j_0}
	The Whitehead product $\big[\mu_1,\mu_2,[\mu_3,\mu_4,\mu_5]\big]$ is realised by the complex $\sk$ which is the minimal triangulation of a $2$-sphere with diameter (see~\hyperref[pic1]{Figure~\ref{pic1}}). Note that $\sk = \sj_1\big(\partial\Delta(3, 4, 5)\big)$, so $\sk\in W_{\Delta}$.

	Using \hyperref[homology]{Theorem~\ref{homology}} and \hyperref[hur_im_2]{Lemma~\ref{hur_im_2}} we identify homology of $\zk$, cellular chains representing homology generators and Whitehead products which map to the generators under the Hurewicz homomorphism (see~\hyperref[t1]{Table~\ref{t1}}).

	The wedge sum of the Whitehead products from the right column of \hyperref[t1]{Table~\ref{t1}} gives a map $(S^5)^{\vee 4}\vee(S^6)^{\vee 3}\vee S^7 \vee S^8 \to \mathcal{Z_K}$ that induces an isomorphism of homology groups. Thus, we have $\mathcal{Z_K} \simeq (S^5)^{\vee 4}\vee(S^6)^{\vee 3}\vee S^7 \vee S^8$.
	
	Note that this is the first example of $\sk$ with a nontrivial iterated Whitehead product in which one of the arguments of a higher product is again a higher product.
\end{exm}

\section{The smallest complex realizing a given Whitehead product}\label{min}
\begin{theorem}
	\label{least}
	The simplicial complex $\sj_{p-1}(\partial\Delta^{q-1}) \in W_{\Delta}$ \textup{(see~\hyperref[soper]{Theorem~\ref{soper}})} is the smallest complex realizing the product
	\begin{align}
	\label{product2}
		\big[\mu_{i_1}, \ldots, \mu_{i_p}, [\mu_{j_1}, \ldots, \mu_{j_q}]\big].
	\end{align}
\end{theorem}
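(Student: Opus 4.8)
The plan is to establish two things: that $\sj_{p-1}(\partial\Delta^{q-1})$ realizes the product~\eqref{product2}, and that it is a subcomplex of every simplicial complex realizing~\eqref{product2}. Together these identify it as the (unique, inclusion-)minimal complex realizing the product, hence the one with the fewest vertices and the fewest faces, which is what ``smallest'' means here. The first statement is the case $n=1$ of \hyperref[exist]{Theorem~\ref{exist}}: $\partial\Delta^{q-1}\in W_{\Delta}$, so \hyperref[soper]{Theorem~\ref{soper}} gives $\sj_{p-1}(\partial\Delta^{q-1})\in W_{\Delta}$, and \hyperref[hur_im_2]{Lemma~\ref{hur_im_2}} identifies the top-dimensional sphere of the resulting wedge with a lift of~\eqref{product2}. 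So the content lies entirely in minimality.

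Write $P=\{i_1,\dots,i_p\}$ and $Q=\{j_1,\dots,j_q\}$; these are disjoint sets of sizes $p$ and $q$ since the arguments of a higher Whitehead product are indexed by distinct vertices. Unwinding the join, the facets of $\mathcal L_0:=\sj_{p-1}(\partial\Delta^{q-1})=(\partial\Delta_P*\partial\Delta_Q)\cup\Delta_P$ are precisely $P$ itself together with the $pq$ sets $(P\setminus\{i_k\})\cup(Q\setminus\{j_l\})$ for $k\in[p]$, $l\in[q]$. So it suffices to prove that each of these is a face of an arbitrary simplicial complex $\mathcal L$ realizing~\eqref{product2}; then $\mathcal L_0\subseteq\mathcal L$ since $\mathcal L$ is closed under subsets.

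The faces $(P\setminus\{i_k\})\cup(Q\setminus\{j_l\})$ are forced by \hyperref[hur_im_2]{Lemma~\ref{hur_im_2}}: the lift of~\eqref{product2} to $\mathcal{Z_L}$ has Hurewicz image represented by the cellular chain $\big(\sum_{k=1}^{p}D_{i_1}\cdots S_{i_k}\cdots D_{i_p}\big)\cdot\big(\sum_{l=1}^{q}D_{j_1}\cdots S_{j_l}\cdots D_{j_q}\big)$ in $\mathcal C_*(\mathcal{Z_L})$, and the $(k,l)$-term of this product is, up to sign, the cell $\chi\big(\{i_k,j_l\},(P\setminus\{i_k\})\cup(Q\setminus\{j_l\})\big)$; for the expression to denote a chain of $\mathcal{Z_L}$ every such cell must lie in $\mathcal{Z_L}$, i.e.\ $(P\setminus\{i_k\})\cup(Q\setminus\{j_l\})\in\mathcal L$ for all $k,l$. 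In particular $\partial\Delta_P\subseteq\mathcal L$. The remaining face $P$ is forced already by the definedness of~\eqref{product2}: in the inductive construction of the iterated higher Whitehead product, deleting the inner argument $[\mu_{j_1},\dots,\mu_{j_q}]$ leaves the sub-product $[\mu_{i_1},\dots,\mu_{i_p}]$, which must carry its canonical extension $\overline{[\mu_{i_1},\dots,\mu_{i_p}]}\colon D_{i_1}^2\times\cdots\times D_{i_p}^2\hookrightarrow\mathcal{Z_L}$; being an inclusion of a cellular subcomplex, this puts the cell $\chi(\varnothing,P)$ into $\mathcal{Z_L}$, so $P\in\mathcal L$. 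Alternatively: triviality of $[\mu_{i_1},\dots,\mu_{i_p}]$ makes its Hurewicz image $\sum_{k}D_{i_1}\cdots S_{i_k}\cdots D_{i_p}$ vanish, and since, by \hyperref[homology]{Theorem~\ref{homology}}, this image is the injective image of a generator of $\widetilde H_{p-2}(\mathcal L_P)$, while $\widetilde H_{p-2}(\mathcal L_P)=\widetilde H_{p-2}(\partial\Delta_P)\neq 0$ whenever $P\notin\mathcal L$, we again get $P\in\mathcal L$. Hence all facets of $\mathcal L_0$ lie in $\mathcal L$, so $\mathcal L_0\subseteq\mathcal L$.

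I expect the one genuinely delicate step to be the derivation of $P\in\mathcal L$: here one must use ``$\mathcal L$ realizes~\eqref{product2}'' with full force --- that~\eqref{product2} is a genuinely defined class, so that the deletion conditions of the inductive definition apply and the canonical extensions appearing in that definition are honest subcomplex inclusions of $\mathcal{Z_L}$ --- and correspondingly invoke either that rigidity or the injectivity of the homomorphism $h$ of \hyperref[homology]{Theorem~\ref{homology}}. The rest is bookkeeping: reading off the facets of $\sj_{p-1}(\partial\Delta^{q-1})$ from its join-and-cone description, and matching the summands of the chain of \hyperref[hur_im_2]{Lemma~\ref{hur_im_2}} with cells of $\mathcal{Z_L}$.
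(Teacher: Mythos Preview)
Your proposal is correct and follows essentially the same route as the paper: show that $\sj_{p-1}(\partial\Delta^{q-1})$ realizes the product (via \hyperref[exist]{Theorem~\ref{exist}}) and then that every facet of $\sj_{p-1}(\partial\Delta^{q-1})$ is forced into any realizing complex $\mathcal L$. The only cosmetic difference is that you read off the join facets $(P\setminus\{i_k\})\cup(Q\setminus\{j_l\})$ directly from the cells appearing in the Hurewicz chain of \hyperref[hur_im_2]{Lemma~\ref{hur_im_2}}, whereas the paper obtains the same inclusions by invoking the triviality of each sub-product $[\mu_{i_1},\dots,\widehat{\mu_{i_k}},\dots,\mu_{i_p},[\mu_{j_1},\dots,\mu_{j_q}]]$ and the resulting canonical disc-extension into $\mathcal Z_{\mathcal L}$.
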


\begin{proof} Assume that $\sk$ realizes the product~\eqref{product2}.

For the product~\eqref{product2} to be defined it is necessary that the product $[\mu_{j_1}, \ldots, \mu_{j_q}]$ is defined and the products $[\mu_{i_1}, \ldots, \mu_{i_p}]$ and
\begin{align}
\label{product3}
	\big[\mu_{i_1}, \ldots \widehat\mu_{i_k},\ldots,\mu_{i_p},[\mu_{j_1}, \ldots, \mu_{j_q}]\big], \quad \text{for $k = 1,\ldots, p$}
\end{align}
are trivial.
	
For the existence of $[\mu_{j_1}, \ldots, \mu_{j_q}]$ it is necessary that every $[\mu_{j_1},\ldots\widehat \mu_{j_k},\ldots, \mu_{j_q}]$ with $k = 1,\ldots, q$ is trivial, that is $\{j_1,\ldots \widehat{j_k} ,\ldots, j_q\}\in\sk$ is a simplex. Thus, the existence of the $[\mu_{j_1}, \ldots, \mu_{j_q}]$ gives an inclusion $\partial \Delta^{q - 1} \hookrightarrow \sk$ and the triviality of  $[\mu_{i_1}, \ldots, \mu_{i_p}]$ gives an inclusion $\Delta^{p-1} = \{i_1, \dots, i_p\} \hookrightarrow \sk$.
	
We shall show that the triviality of products~\eqref{product3} is equivalent to the existence of inclusions
\begin{align}\label{incl}
    \{i_1,\ldots\widehat i_k,\ldots, i_p\}*\partial\Delta^{q-1}\hookrightarrow \sk,\quad \text{for $k = 1,\ldots, p$.}
    \end{align}
	We can assume $k = p$ without loss of generality. Since $\big[\mu_{i_1}, \ldots,\mu_{i_{p-1}},[\mu_{j_1}, \ldots, \mu_{j_q}]\big]$ is trivial we have an inclusion
	\begin{align*}
    D_{i_1}^2\times\cdots\times D_{i_{p-1}}^2\times\left(\bigcup\limits_{k = 1}^q D_{j_1}^2\times\cdots\times S_{j_k}^1\times\cdots\times D_{j_q}^2\right) \hookrightarrow \mathcal{Z_K},
    \end{align*}
    see the proof of the case $n = 1$ in \hyperref[hur_im_2]{Lemma~\ref{hur_im_2}}. Therefore, we have an inclusion~\eqref{incl} for $k = p$.

 	It follows that $\sj_{p-1}(\partial\Delta^{q-1})$  embeds into any simplicial complex $\sk$ that realizes the product~\eqref{product2}. By \hyperref[exist]{Theorem~\ref{exist}}, the complex $\sk = \sj_{p-1}(\partial\Delta^{q-1})$ realizes the product~\eqref{product2}, so it is the smallest complex with this property.
\end{proof}

The next theorem is proved similarly.

\begin{theorem}
The simplicial complex $\sj_{p_0-1}\circ\sj_{p_1-1}\circ\dots\circ\sj_{p_{n-1}-1}(\partial\Delta^{q-1}) \in W_{\Delta}$ \textup{(see~\hyperref[soper]{Theorem~\ref{soper}})} is the smallest complex realizing the product
\[
	\big[\mu_{i_{01}}, \ldots, \mu_{i_{0p_0}},[\dots [\mu_{i_{n1}}, \ldots, \mu_{i_{np_n}}]\dots]\big].
\]
\end{theorem}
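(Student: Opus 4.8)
The plan is to imitate closely the proof of Theorem~\ref{least}, upgrading it from the case of one pair of nested brackets to the general nested product. Assume that $\sk$ realizes the product $w = \big[\mu_{i_{01}}, \ldots, \mu_{i_{0p_0}},[\dots [\mu_{i_{n1}}, \ldots, \mu_{i_{np_n}}]\dots]\big]$. As in the two-bracket case, for $w$ to even be defined we need, recursively from the innermost bracket outward: the innermost product $[\mu_{i_{n1}}, \ldots, \mu_{i_{np_n}}]$ to be defined, which forces $\partial\Delta^{p_n-1}$ on the vertices $\{i_{n1},\dots,i_{np_n}\}$ to be a subcomplex of $\sk$; and at each level $k$ (going outward from $k=n-1$ down to $k=0$) the $(p_k)$-fold product $\big[\mu_{i_{k1}},\dots,\mu_{i_{kp_k}}, w_{k+1}\big]$ to be defined, where $w_{k+1}$ denotes the subproduct nested inside. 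Definedness at level $k$ requires that $[\mu_{i_{k1}},\dots,\mu_{i_{kp_k}}]$ be trivial — forcing the full simplex $\Delta^{p_k-1}$ on $\{i_{k1},\dots,i_{kp_k}\}$ into $\sk$ — and that each of the $p_k$ products obtained by omitting one $\mu_{i_{kl}}$ from the level-$k$ bracket be trivial.

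The heart of the argument is to show, by downward induction on the bracket level, that triviality of these omit-one subproducts is equivalent to the existence of the expected join subcomplexes in $\sk$. Concretely, I would establish that for each level $k$ the subcomplex $\sj_{p_k-1}\circ\sj_{p_{k+1}-1}\circ\cdots\circ\sj_{p_{n-1}-1}(\partial\Delta^{p_n-1})$, built on the appropriate vertex set, embeds into $\sk$, and moreover that deleting any single vertex $i_{kl}$ from the level-$k$ ``outer simplex'' and forming the corresponding cone/join still lands inside $\sk$. The key geometric input is exactly the one used in Lemma~\ref{hur_im_2} and in Theorem~\ref{least}: triviality of the product $\big[\mu_{i_{k1}},\dots,\widehat{\mu_{i_{kl}}},\dots,\mu_{i_{kp_k}}, w_{k+1}\big]$ yields an inclusion of a product of discs (in the $i_{kj}$, $j\neq l$, directions) times the subcomplex $\partial D^{d(w_{k+1})}_{w_{k+1}}$ into $\zk$, and such an inclusion of cells of $\zk$ corresponds, via $\chi(J,I)\subset\zk \iff I\in\sk$, to the statement that the relevant join of a simplex with the already-embedded complex realizing $w_{k+1}$ lies in $\sk$. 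Iterating this from the innermost bracket outward forces $\sj_{p_0-1}\circ\cdots\circ\sj_{p_{n-1}-1}(\partial\Delta^{p_n-1})$ to embed into $\sk$.

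Having shown the embedding, the proof closes exactly as Theorem~\ref{least} does: by Theorem~\ref{exist} the complex $\mathcal L = \sj_{p_0-1}\circ\sj_{p_1-1}\circ\dots\circ\sj_{p_{n-1}-1}(\partial\Delta^{q-1})$ (with $q = p_n$) lies in $W_\Delta$ and realizes $w$, so it is at the same time a subcomplex of every $\sk$ realizing $w$ and itself realizes $w$; hence it is the smallest such complex. I expect the main obstacle to be bookkeeping rather than conceptual: one must carefully track which vertex set each $\sj$ operation acts on when the $\sj$'s are composed (the join $\partial\Delta^n*\sk$ introduces fresh vertices at each stage, so the outer operations attach to the vertex set of the already-constructed complex, not directly to the $i_{kl}$), and one must verify that ``the product of discs times $\partial D^{d(w_{k+1})}$'' genuinely corresponds to the join of the deleted simplex with the realizing subcomplex of $w_{k+1}$ — i.e.\ that the inductive hypothesis about the structure of the lift of $w_{k+1}$ inside $\zk$ is strong enough to feed the next step. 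Making the inductive statement precise enough to carry this through (essentially: ``$w_{k+1}$ is realized by the standard subcomplex $\mathcal L_{k+1}$, and any $\sk$ in which $w_{k+1}$ is defined contains $\mathcal L_{k+1}$, with the lift of $w_{k+1}$ supported on exactly the cells $\chi(J,I)$ with $I\in\mathcal L_{k+1}$'') is the crux; once it is in place, each inductive step is the same disc-times-boundary-sphere computation already performed in Lemma~\ref{hur_im_2} and Theorem~\ref{least}.
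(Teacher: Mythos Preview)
Your proposal is correct and is precisely what the paper intends: the paper's own proof consists solely of the sentence ``The next theorem is proved similarly,'' referring back to Theorem~\ref{least}. Your inductive extension of that argument---forcing $\partial\Delta^{p_n-1}$ from the innermost bracket, then at each level $k$ using triviality of the omit-one subproducts to obtain the join inclusions $\{i_{k1},\dots,\widehat{i_{kl}},\dots,i_{kp_k}\}*\mathcal L_{k+1}\hookrightarrow\sk$, and closing with Theorem~\ref{exist}---is exactly the intended ``similarly.''
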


In \hyperref[least]{Theorem~\ref{least}} we have shown that if the simplicial complex $\mathcal L$ realizes the product~\eqref{product2} than there exists an embeding $\mathcal J_{p-1}(\partial\Delta^{q-1}) \hookrightarrow \mathcal L$. However, the next example shows that $\mathcal J_{p-1}(\Delta^{q-1})$ is not necessarily a full subcomplex of $\mathcal L$.

\begin{table}[h]
\renewcommand{\arraystretch}{1.3}
	\begin{center}
		\begin{tabular}{||c||c|c|c||}
			\cline{1-4}
			\multirow{3}{*}{$H_5(\mathcal{Z_K})$}
			& $\Z$ & $D_3D_4S_5 + D_3S_4D_5 + S_3D_4D_5$ & $[\mu_3, \mu_4, \mu_5]$ \\
			\cline{2-4}
			& $\Z$ & $D_1D_2S_4 + D_1S_2D_4 + S_1D_2D_4$ & $[\mu_1, \mu_2, \mu_4]$ \\
			\cline{2-4}
			& $\Z$ & $D_1D_2S_5 + D_1S_2D_5 + S_1D_2D_5$ & $[\mu_1, \mu_2, \mu_5]$\\
			\cline{1-4}
			$H_6(\mathcal{Z_K})$ & $\Z$ & $S_5(D_1D_2S_4 + D_1S_2D_4 + S_1D_2D_4)$ & $\big[\mu_5,[\mu_1, \mu_2, \mu_4]\big]$\\
			\hline
			$H_8(\mathcal{Z_K})$ &	$\Z$ & $(D_1S_2+S_1D_2)(D_3D_4S_5 + D_3S_4D_5 + S_3D_4D_5)$ & $\big[\mu_1,\mu_2,[\mu_3,\mu_4,\mu_5]\big]$
			\\
			\hline
		\end{tabular}
		\caption{Homology of $\mathcal{Z_L}$ for $\mathcal L = \sj_1(\partial\Delta_{3, 4, 5}^2)\cup\{1, 2, 3\}$ (see~\hyperref[pic2]{Figure~\ref{pic2}})}\label{t2}
	\end{center}
\end{table}
	
\begin{exm}
	Consider the simplicial complex $\mathcal L = \sj_1\big(\partial\Delta(3, 4, 5)\big)\cup\Delta(1, 2, 3)$, see~\hyperref[pic2]{Figure~\ref{pic2}}.	An argument similar to \hyperref[j_0]{Example~\ref{j_0}} shows that $\mathcal{Z_K} \simeq (S^5)^{\vee 3}\vee S^6 \vee S^8$, see~\hyperref[t2]{Table~\ref{t2}}. Here $S^8$ is a lift of the product $\big[\mu_1,\mu_2,[\mu_3,\mu_4,\mu_5]\big]$. One can see that $\sj_1\big(\partial\Delta(3, 4, 5)\big)$ is not a full subcomplex of $\mathcal L$.
\end{exm}

\section{An example of unrealizability} \label{non}

In this section we give an example of a simplicial complex $\sk$ such that the corresponding moment-angle complex $\zk$ is homotopy equivalent to a wedge of spheres, but $\sk \notin W_{\Delta}$. In other words, there is a sphere in the wedge which is not realizable by any linear combination of iterated higher Whitehead products (in the sense of \hyperref[wdelta]{Definitioin~\ref{wdelta}}). This implies that the answer to the problem \cite[Problem 8.4.5]{TT} is negative.

\begin{prop}
	Let $\sk$ be the simplicial complex $(\partial\Delta^2*\partial\Delta^2)\cup\Delta^2\cup\Delta^2$. The moment-angle complex $\zk$ is homotopy equivalent to a wedge of spheres.
\end{prop}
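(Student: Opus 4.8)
The plan is to compute $H_*(\zk)$ from \hyperref[homology]{Theorem~\ref{homology}}, observe that it is free and concentrated in degrees $7$--$10$, and then build a map from a wedge of spheres inducing an isomorphism on homology.

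First I would fix the vertex triples $\{1,2,3\}$ and $\{4,5,6\}$ of the two copies of $\partial\Delta^2$, and for $J=J_1\sqcup J_2$ with $J_1\subseteq\{1,2,3\}$, $J_2\subseteq\{4,5,6\}$ analyse the full subcomplex $\sk_J=\big((\partial\Delta^2)_{J_1}*(\partial\Delta^2)_{J_2}\big)\cup\Delta_{J_1}\cup\Delta_{J_2}$ (where $\Delta_{J_i}$ is the full simplex), noting that $(\partial\Delta^2)_{J_i}=\Delta_{J_i}$ unless $|J_i|=3$. A short case analysis (the join of a simplex with any complex is contractible; Mayer--Vietoris for the rest) should give: $\sk_J$ is contractible unless $J_1,J_2\ne\varnothing$ and $\max(|J_1|,|J_2|)=3$; in that case $\sk_J\simeq S^2$ whenever $J\ne[6]$, while for $J=[6]$ one writes $(\partial\Delta^2*\partial\Delta^2)\cup\Delta(1,2,3)=\sj_2(\partial\Delta(4,5,6))\simeq S^3\vee S^2$ (the Note after \hyperref[soper]{Theorem~\ref{soper}}) and observes that $\Delta(4,5,6)$ is glued on along the null-homotopic circle $\partial\Delta(4,5,6)$, whence $\sk_{[6]}\simeq S^2\vee S^2\vee S^3$. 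Feeding this into the direct sum decomposition of \hyperref[homology]{Theorem~\ref{homology}} then shows that $\widetilde H_*(\zk)$ is free and vanishes outside degrees $7$ (rank $6$, from the six $J$ with $\{|J_1|,|J_2|\}=\{1,3\}$), $8$ (rank $6$, from $\{|J_1|,|J_2|\}=\{2,3\}$), $9$ (rank $2$, from $\widetilde H_2(\sk_{[6]})$) and $10$ (rank $1$, from $\widetilde H_3(\sk_{[6]})$).

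By the same decomposition it is enough to realise, for each $J$ with $\widetilde H_*(\sk_J)\ne0$, a basis of the corresponding summand of $H_*(\zk)$ by maps from spheres. Since $\zk$ is simply connected (as is every moment-angle complex) and $\widetilde H_i(\zk)=0$ for $i\le6$, it is $6$-connected, so the Hurewicz homomorphism is onto in degrees $7$ and $8$ and the generators there are spherical (concretely they are Hurewicz images of higher Whitehead products such as $[\mu_a,\mu_4,\mu_5,\mu_6]$, $[\mu_1,\mu_2,\mu_3,\mu_c]$ and $[\mu_b,[\mu_a,\mu_4,\mu_5,\mu_6]]$, which are defined and lift to $\zk$ because every triple of $[6]$ is a face of $\sk$). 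The two degree-$9$ generators are Hurewicz images of iterated higher Whitehead products of the form $[\mu_e,[\mu_f,[\mu_a,\mu_b,\mu_c,\mu_d]]]$ (computed via \hyperref[hur_im_2]{Lemma~\ref{hur_im_2}}; their existence again uses only that all triples are faces). For the degree-$10$ generator I would use that the simplicial inclusion $\partial\Delta^2*\partial\Delta^2\hookrightarrow\sk$ induces an inclusion $S^5\times S^5=\mathcal Z_{\partial\Delta^2}\times\mathcal Z_{\partial\Delta^2}=\mathcal Z_{\partial\Delta^2*\partial\Delta^2}\hookrightarrow\zk$, and that each $S^5$-factor is the boundary $\partial\big((D^2)^3\big)$ of a $6$-ball $(D^2)^3\subset\zk$ — here we use that $\{1,2,3\}$ and $\{4,5,6\}$ are simplices of $\sk$ — hence is null-homotopic in $\zk$. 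Thus this inclusion restricts to a null-homotopic map on $S^5\vee S^5$, so it factors up to homotopy through the collapse $S^5\times S^5\to S^5\wedge S^5=S^{10}$, producing a map $S^{10}\to\zk$; by the functoriality of \hyperref[homology]{Theorem~\ref{homology}} (the inclusion $\partial\Delta^2*\partial\Delta^2\hookrightarrow\sk$ is an isomorphism on $\widetilde H_3$, since adjoining the two $2$-dimensional simplices does not change $H_3$) this map carries the fundamental class of $S^{10}$ to a generator of $H_{10}(\zk)$.

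Wedging all of these maps produces $W=(S^7)^{\vee 6}\vee(S^8)^{\vee 6}\vee(S^9)^{\vee 2}\vee S^{10}\to\zk$ inducing an isomorphism on integral homology; both spaces are simply connected, so Whitehead's theorem makes it a homotopy equivalence and $\zk$ a wedge of spheres. The main obstacle is the last realisation step: the generator of $H_{10}(\zk)$ is \emph{not} the Hurewicz image of any iterated higher Whitehead product (this is precisely the phenomenon the section exploits), so one genuinely needs the ``collapse the null $S^5$-factors'' construction to produce an $S^{10}\to\zk$ realising it; carefully carrying out the homology computation for $J=[6]$ and tracking which full subcomplexes contribute is the other place requiring care.
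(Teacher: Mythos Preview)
Your argument is correct and takes a genuinely different route from the paper's. The paper does \emph{not} construct the spherical classes explicitly: after computing the homology (exactly as you do) it invokes the BBCG stable splitting to get a homotopy equivalence $f\colon\Sigma X\xrightarrow{\ \simeq\ }\Sigma\zk$ with $X=(S^7)^{\vee 6}\vee(S^8)^{\vee 6}\vee(S^9)^{\vee 2}\vee S^{10}$, observes that both $X$ and $\zk$ are $6$-connected, and then uses the Freudenthal suspension theorem (an isomorphism on $\pi_n$ for $n<13$) to desuspend $f$ to a map $g\colon X\to\zk$ that is still a homology isomorphism. Your approach instead builds the map $X\to\zk$ by hand: iterated higher Whitehead products for the degree-$7$, $8$, $9$ generators (with Hurewicz images computed via \hyperref[hur_im_2]{Lemma~\ref{hur_im_2}}), and for the top cell the factorisation of the inclusion $S^5\times S^5=\mathcal Z_{\partial\Delta^2*\partial\Delta^2}\hookrightarrow\zk$ through $S^5\wedge S^5=S^{10}$, using that each $S^5$ bounds a polydisc $(D^2)^3$ in $\zk$. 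This is more work but more informative: it exhibits an explicit $S^{10}\to\zk$ realising the top generator, which dovetails nicely with the next proposition (no iterated Whitehead product can do this). One small remark: your aside that ``the Hurewicz homomorphism is onto in degrees $7$ and $8$'' is only immediate in degree~$7$; fortunately it is not load-bearing, since you supply the explicit Whitehead products in those degrees anyway.
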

\begin{proof}
	Note that the $2$-skeleton of $\sk$ coincides with the $2$-skeleton of a $5$-dimensional simplex. Therefore, $\zk$ is $6$-connected.
	Note that $|\sk| \simeq S^3\vee S^2\vee S^2$.
	Using \hyperref[homology]{Theorem~\ref{homology}} one can describe homology of $\mathcal{Z_K}$  (see~\hyperref[t2]{Table~\ref{t3}}).
	\begin{table}[h]
    	\renewcommand{\arraystretch}{1.3}
    	\begin{center}
    		\begin{tabular}{||c|c||}
                \hline
                $H_7(\mathcal{Z_K})$ & $\Z^6$ \\
                \hline
                $H_8(\mathcal{Z_K})$ & $\Z^6$ \\
                \hline
                $H_9(\mathcal{Z_K})$ & $\Z^2$ \\
                \hline
                $H_{10}(\mathcal{Z_K})$ & $\Z$ \\
                \hline
			\end{tabular}
    	\caption{Homology of $\mathcal{Z_K}$ for $\sk = (\partial\Delta^2*\partial\Delta^2)\cup\Delta^2\cup\Delta^2$}\label{t3}
    	\end{center}
    \end{table}

    By \cite{BBCG} (see also~\cite[Corollary 8.3.6]{TT}), there is a homotopy equivalence
    \begin{align*}
    	f\colon (S^8)^{\vee 6}\vee (S^9)^{\vee 6}\vee (S^{10})^{\vee 2}\vee S^{11} \xrightarrow{\simeq} \Sigma\mathcal{Z_K}.
    \end{align*}
    Denote $X = (S^7)^{\vee 6}\vee (S^8)^{\vee 6}\vee (S^9)^{\vee 2}\vee S^{10}$. As both spaces $X$ and $\zk$ are $6$-connected, by the Freudental Theorem, the suspension homomorphism $\Sigma\colon \pi_n \to \pi_{n+1}$ for $X$ and $\mathcal{Z_K}$ is an isomorphism for $n < 13$. Consider the following commutative diagram for $n < 13$:
    \begin{center}
		\begin{tikzpicture}[every node/.style={midway}]
		\matrix[column sep={5cm,between origins},row sep={2cm}] at (0,0) {
			\node(A)   {$\pi_{n+1}(\Sigma X)$}; & \node(B) {$\pi_{n+1}(\Sigma \zk)$}; \\
			\node(C)   {$\pi_{n  }(X)$}; & \node(D) {$\pi_{n  }(\zk)$.}; \\};
		\draw[->] (C) -- (A) node[anchor=east] {$\cong$} node[anchor=west] {$\Sigma_X$};
		\draw[->] (A) -- (B) node[anchor=south]  {$f_*$};
		\draw[->] (D) -- (B)  node[anchor=east] {$\cong$} node[anchor=west] {$\Sigma_{\zk}$};
		\draw[dashed, ->](C) -- (D) node[anchor=south] {$\Sigma_{\zk}^{-1}\circ f_* \circ \Sigma_X$};
		\end{tikzpicture}
	\end{center}
	The class $[i^n_j]$ of the inclusion of the $j$-th $n$-sphere $i_j^n\colon S^n \hookrightarrow X$ maps to the class of a map $S^n\to \zk$ under the composite $\Sigma_{\zk}^{-1}\circ f_* \circ \Sigma_X$. The wedge sum of these maps gives a map $g\colon X\to\zk$. By construction, $\Sigma g\colon \Sigma X \to \Sigma\zk$ induces an isomorphism in homology. Thus, $g$ also induces an isomorphism in homology, so it is a homotopy equivalence.
\end{proof}

\begin{prop}
	The sphere $S^{10}\subset \zk$ cannot be realized by any linear combination of general iterated higher Whitehead products.
\end{prop}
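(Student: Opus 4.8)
\emph{Plan.}
By the previous Proposition $\zk\simeq(S^7)^{\vee 6}\vee(S^8)^{\vee 6}\vee(S^9)^{\vee 2}\vee S^{10}$, so $H_{10}(\zk)\cong\Z$ and the claim is that no general iterated higher Whitehead product $S^{10}\to\zk$ — nor any $\Z$‑linear combination of them — has Hurewicz image a generator of $H_{10}(\zk)$. Write $\sk=(\partial\Delta_U*\partial\Delta_V)\cup\Delta_U\cup\Delta_V$, where $U\sqcup V=[6]$ are the vertex sets of the two copies of $\partial\Delta^2$; then every $2$‑ and every $3$‑subset of $[6]$ is a face, a $4$‑subset is a face iff it meets each of $U,V$ in two vertices, and no subset of size $\geqslant 5$ is a face. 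By \hyperref[homology]{Theorem~\ref{homology}} we have $H_{10}(\zk)=\bigoplus_{J\subseteq[6]}\widetilde H_{9-|J|}(\sk_J)$, and since $\dim\sk_J\leqslant 3$ only $J=[6]$ contributes, so $H_{10}(\zk)\cong\widetilde H_3(\sk)$; the latter is generated by the fundamental cycle $\sum_\sigma\ve_\sigma\sigma$ of the triangulated $3$‑sphere $\partial\Delta_U*\partial\Delta_V$, summed over its nine tetrahedra $\sigma=\{u_1,u_2,v_1,v_2\}$ with $u_i\in U$, $v_i\in V$. Feeding this through the injection $L\mapsto\ve(L,[6])\chi([6]\setminus L,L)$ of \hyperref[homology]{Theorem~\ref{homology}}, the generator of $H_{10}(\zk)$ is represented by the cellular chain $\gamma=\sum_\sigma\pm\,\chi(\{u_3,v_3\},\{u_1,u_2,v_1,v_2\})$, where $u_3,v_3$ are the vertices of $U,V$ missing from $\sigma$. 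Two things will be used repeatedly: every cell occurring in $\gamma$ carries exactly one $S$‑factor with index in $U$ and exactly one with index in $V$; and $\zk$ has no cells of dimension $11$, so in degree $10$ two cellular cycles are homologous only if they are equal.

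\emph{Reducing the shape of a realizing product.}
If some $\mu_i$ occurs twice in a general iterated product $w$, then by \hyperref[isochain]{Lemma~\ref{isochain}} every monomial of $h(w)$ contains $u_i^2$, $u_iv_i$ or $v_i^2$, so $h(w)=0$; if some $\mu_i$ is absent, $h(w)$ lies in a summand $\widetilde H_*(\sk_J)$ with $J\subsetneq[6]$, which does not meet $H_{10}$. So $w$ uses each vertex once, and then $d(w)=2\cdot 6-(\text{number of brackets})=10$ forces exactly two brackets; a bracket tree with six leaves and only two internal vertices necessarily has one bracket as an argument of the other and no further nesting, so after permuting arguments (which only changes signs) $w=[\mu_{b_1},\dots,\mu_{b_t},[\mu_{a_1},\dots,\mu_{a_s}]]$ with $A\sqcup B=[6]$, $s\geqslant 2$, $t\geqslant 1$. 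For $[\mu_A]$ to be defined one needs $\partial\Delta_A\subseteq\sk$; this fails when $|A|=5$ (one facet of $\partial\Delta_A$ is the vertex set of one triangle together with a vertex of the other, not a face of $\sk$), so $2\leqslant|A|\leqslant 4$. If $A\in\sk$ — which holds for all $A$ with $2\leqslant|A|\leqslant 3$ and for the $4$‑subsets meeting each of $U,V$ in two vertices — then $[\mu_A]=0$ in $(\C P^\infty)^{\sk}$, hence $[\mu_B,[\mu_A]]$ is null‑homotopic in $(\C P^\infty)^{\sk}$; since $(\C P^\infty)^m=K(\Z^m,2)$ has $\pi_{11}=0$, a null‑homotopic map $S^{10}\to(\C P^\infty)^{\sk}$ has a unique lift to the homotopy fibre $\zk$, namely the constant map, so $[\mu_B,[\mu_A]]=0$ in $\pi_{10}(\zk)$ and $h(w)=0$.

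\emph{The remaining case, and conclusion.}
It remains to treat $A\notin\sk$, which by the face description forces $|A|=4$ with $A$ meeting one of $U,V$ in all three vertices and the other in a single vertex, so $B$ is the remaining edge inside that other triangle. Now $[\mu_A]$ is a genuine higher Whitehead product, $h([\mu_A])=\sum_{a\in A}\bigl(\prod_{a'\in A\setminus\{a\}}D_{a'}\bigr)S_a$, and one would compute $h([\mu_B,[\mu_A]])$ by the method of \hyperref[hur_im_2]{Lemma~\ref{hur_im_2}}. The point is that here the canonical cell subcomplex of that lemma is no longer contained in $\zk$: precisely two of the monomials of the naive chain $\bigl(\sum_{b\in B}(\prod D)S_b\bigr)\cdot h([\mu_A])$ have $D$‑support equal to a triangle plus one vertex, which is not a face of $\sk$. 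Correcting these away, $h(w)$ is still supported on the nine cells $\chi(\{u,v\},[6]\setminus\{u,v\})$, but the simplicial $3$‑cycle it corresponds to under \hyperref[homology]{Theorem~\ref{homology}} involves the triangle $\Delta_U$ with nonzero coefficient, whereas a multiple of the fundamental cycle of $\partial\Delta_U*\partial\Delta_V$ does not; hence $h(w)=0$ in this case as well. Thus every general iterated higher Whitehead product $S^{10}\to\zk$ has vanishing Hurewicz image, so by linearity of the Hurewicz homomorphism the same holds for every $\Z$‑linear combination, while the inclusion of the $S^{10}$ summand has Hurewicz image $\gamma\neq 0$; therefore $S^{10}$ is not realized. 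I expect the main obstacle to be exactly this last step — pinning down the Hurewicz image of $[\mu_B,[\mu_A]]$ once the top cells of the model sphere drop out of $\zk$, i.e. making precise that the missing cells force the image to be null; the reduction and the case $A\in\sk$ are comparatively routine.
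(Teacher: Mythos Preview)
Your reduction to $w=[\mu_B,[\mu_A]]$ with $A\sqcup B=[6]$ and $2\leqslant|A|\leqslant 5$ is fine, but from there the paper's argument and yours diverge sharply, and the paper's is both shorter and avoids the gap in your last case.

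The paper does not compute any Hurewicz images at all. It simply checks that \emph{no} candidate iterated product is even defined. For $|A|=5$ the inner bracket is undefined (some tetrahedral facet of $\partial\Delta_A$ has a $3{+}1$ split over $U\sqcup V$, hence is not in $\sk$). For $|A|\leqslant 3$ the paper dismisses the product because the $2$-skeleton of $\sk$ equals that of $\Delta^5$, so all $2$- and $3$-fold brackets vanish; combined with the next case this leaves nothing. For $|A|=4$ it uses the criterion from \hyperref[least]{Theorem~\ref{least}}: definedness of the outer bracket requires each $\{b\}*\partial\Delta_A\subset\sk$ for $b\in B$, i.e.\ every tetrahedron $\{b\}\cup(A\setminus\{a\})$ must be a face. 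In both symmetry classes ($B$ inside one part, or $B$ split across the parts) one of these tetrahedra has a $3{+}1$ split --- e.g.\ $\{2,4,5,6\}$ --- and is not a face, so the outer bracket is undefined. That finishes the proof.

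Your final case --- $|A|=4$ with $A\notin\sk$, say $A=U\cup\{v\}$ and $B\subset V$ --- is exactly where your argument breaks. You acknowledge that ``the canonical cell subcomplex \ldots\ is no longer contained in $\zk$'', but then proceed to ``correct away'' the offending monomials and reason about $h(w)$ anyway. There is no such $w$: the missing cells mean precisely that the sub-bracket $[\mu_b,[\mu_A]]$ (for $b\in B$) is \emph{nontrivial}, so by the paper's definition the outer higher product $[\mu_B,[\mu_A]]$ is not defined, and there is no map whose Hurewicz image one could compute or correct. The sentence about the simplicial $3$-cycle ``involving $\Delta_U$ with nonzero coefficient'' is therefore about a chain that is not the image of anything. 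The correct conclusion in this case is the paper's: the product is undefined, full stop.

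Two smaller points. Your claim that ``a null-homotopic map $S^{10}\to(\C P^\infty)^{\sk}$ has a unique lift to the homotopy fibre $\zk$, namely the constant map'' is false as stated (lifts of the constant map are exactly maps into the fibre). What you want is that $\pi_{10}(\zk)\to\pi_{10}\bigl((\C P^\infty)^{\sk}\bigr)$ is an isomorphism, which does follow from the long exact sequence since $\pi_{10}$ and $\pi_{11}$ of $(\C P^\infty)^m$ vanish. And the implication ``$[\mu_A]=0\Rightarrow[\mu_B,[\mu_A]]=0$'' for higher brackets is not automatic (higher products have indeterminacy); in any event you don't need it, because in every one of your cases with $A\in\sk$ the outer bracket is again undefined by the same $3{+}1$ tetrahedron obstruction, so the question of its value never arises.
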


\begin{proof}
By dimensional reasons, if there is a general iterated higher Whitehead realizing the sphere $S^{10}$, then it contains exactly two nested brackets.
The internal bracket may have size 2, 3, 4 or 5.
Since the 2-skeleton of $\sk$ coincides with the 2-skeleton of $\Delta^5$, all Whitehead 2- and 3-products are trivial. We are left with the following two iterated products with the internal bracket of size 5 and~4:
$$
[\mu_{i_1}, [\mu_{i_2}, \mu_{i_3}, \mu_{i_4}, \mu_{i_5}, \mu_{i_6}]], \quad
[\mu_{i_1}, \mu_{i_2}, [\mu_{i_3}, \mu_{i_4}, \mu_{i_5}, \mu_{i_6}]]. 
$$
The first product is not defined because $\sk$ does not contain $\partial\Delta(i_2, i_3, i_4, i_5, i_6)$. For the second product, without loss of generality we can consider two cases $\{i_1, i_2\} = \{1, 2\}$ and $\{i_1, i_2\} = \{1, 4\}$. In both cases the product is not defined because $\sk$ does not contain $\Delta(2,4,5,6)$.
\end{proof}

\end{document}